\newtheorem{theorem}{Theorem}[section]
\newtheorem{lemma}[theorem]{Lemma}
\newtheorem{proposition}[theorem]{Proposition}
\newtheorem{corollary}[theorem]{Corollary}
\theoremstyle{definition}
\newtheorem{definition}[theorem]{Definition}
\theoremstyle{remark}
\newtheorem{remark}[theorem]{Remark}
\numberwithin{equation}{section}
\newcommand{\id}{\mathop{\mathrm{id}}}
\newcommand{\CAT}{\mathop{\mathrm{CAT}}}
\newcommand{\Rho}{\mathrm{P}}
\newcommand{\umink}{\mathop{\overline{\mathcal M}}\nolimits}
\newcommand{\lmink}{\mathop{\underline{\mathcal M}}\nolimits}
\newcommand{\ugauss}{\mathop{\overline{\mathcal G}}\nolimits}
\newcommand{\lgauss}{\mathop{\underline{\mathcal G}}\nolimits}
\newcommand{\gauss}{\mathop{{\mathcal G}}\nolimits}
\DeclareMathOperator{\vol}{vol}
\DeclareMathOperator{\dist}{dist}
\renewcommand{\epsilon}{\varepsilon}
\renewcommand{\phi}{\varphi}
\renewcommand{\kappa}{\varkappa}
\title{Waist of balls in hyperbolic and spherical spaces}
\author{Arseniy~Akopyan{$^\spadesuit$}}
\email{akopjan@gmail.com}
\author{Roman~Karasev{$^\clubsuit$}}
\email{r\_n\_karasev@mail.ru}
\urladdr{http://www.rkarasev.ru/en/}
\address{{$^\spadesuit$} Institute of Science and Technology Austria (IST Austria), Am Campus 1, 3400 Klosterneuburg, Austria}
\address{{$^\clubsuit$} Moscow Institute of Physics and Technology, Institutskiy per. 9, Dolgoprudny, Russia 141700}
\address{{$^\clubsuit$} Institute for Information Transmission Problems RAS, Bolshoy Karetny per. 19, Moscow, Russia 127994}
\thanks{{$^\clubsuit$} Supported by the Russian Foundation for Basic Research grant 18-01-00036}
\subjclass[2010]{49Q20,53C20,53C23}
\begin{document}
	
\begin{abstract}
In this paper we find a tight estimate for Gromov's waist of the balls in spaces of constant curvature, deduce the estimates for the balls in Riemannian manifolds with upper bounds on the curvature ($\mathrm{CAT}(\kappa)$-spaces), and establish similar result for normed spaces.
\end{abstract}

\maketitle

\section{Introduction}

The Gromov--Memarian waist of the sphere theorem~\cite{grom2003,mem2009,karvol2013} asserts that given a continuous map \mbox{$f : \mathbb{S}^n \to Y$}, with $\mathbb S^{n}$ the unit round sphere, $Y$ an $(n-k)$-dimensional manifold, and map having zero degree if $k=0$ (in what follows we only consider the case $k>0$ and do not care about the degree), it is possible to find $y\in Y$ such that for every $t\ge 0$
\[
\vol \nu_t(f^{-1}(y), \mathbb S^n) \ge \vol \nu_t(\mathbb S^k, \mathbb S^n),
\]
where $\nu_t(X, M)$ denotes the $t$-neighborhood of $X$ in the Riemannian manifold $M$.

Going to the limit $t \to 0$ this proves the result of Almgren (usually referred to \cite{almgren1965theory}, but this text is not available to the authors), that for any sufficiently regular map $f:\mathbb{S}^n \to \mathbb R^{n-k}$ with all fibers piece-wise smooth manifolds, there exists $y \in \mathbb R^{n-k}$ such that the $k$-dimensional volume of the fiber $f^{-1}(y)$ is greater or equal to the volume of the sphere $\mathbb{S}^{k}$. For merely continuous maps, the Gromov--Memarian theorem proves that $\lmink_k f^{-1}(y) \ge \vol_k \mathbb S^k$, where $\lmink_k$ denotes \emph{the lower Minkowski content}, which is defined in the following way: Let $M$ be a Riemannian manifold of dimension $n$ and $X\subseteq M$ be its subset, 
\begin{equation}
\label{equation:lower-mink}
\lmink_k (X, M) := \liminf_{t\to+0} \frac{\vol \nu_t(X, M)}{v_{n-k}t^{n-k}}\quad\text{and}\quad \umink_k (X, M) := \limsup_{t\to+0} \frac{\vol \nu_t(X, M)}{v_{n-k}t^{n-k}},
\end{equation}
where $v_m$ is the volume of the $m$-dimensional Euclidean unit ball. These values are called \emph{lower and upper Minkowski $k$-dimensional content} and are normalized to coincide with the Riemannian $k$-dimensional volume of $X$ in the case $X$ is a smooth submanifold of $M$. We write simply $\lmink_k (X)$ instead of $\lmink_k (X, M)$ when the ambient manifold is assumed, but in Section~\ref{sec:monotonicity} the dependence on the ambient manifold will play an essential role.

Gromov defines \emph{$k$-waist} of a set $X$ as the infimum of the numbers of $w>0$ for which $X$ admits a continuous map $X \rightarrow \mathbb{R}^{n-k}$, such that the $k$-dimensional volume of the preimage of any point $y\in \mathbb{R}^{n-k}$ is not greater than $w$. So, the $k$-waist of the unit sphere $\mathbb{S}^n$ in terms of the lower Minkowski content of fibers equals the volume of $\mathbb{S}^k$. In fact, the question of how we define ``$k$-volume'' is rather subtle. One can consider $k$-dimensional Riemannian volume and ask for sufficiently regular maps, consider the lower Minkowski content of fibers for arbitrary continuous maps, or even consider the Hausdorff measure of the fibers; the latter case seems to be the hardest, see the discussion and the references in~\cite{ahk2016}.

Not much was known about waists of other Riemannian manifolds. But recently Klartag in~\cite{klartag2016} proved that the $k$-waist of the unit cube equals $1$. This generalizes the Vaaler theorem \cite{vaaler1979} stating that the volume of any section of the unit cube by a $k$-plane passing through its origin has $k$-volume at least $1$. Klartag's idea was to transport the Gaussian measure to the uniform measure in the cube by a $1$-Lipschitz map and apply Gromov's waist theorem for the Gaussian measure \cite{grom2003}, see Section~\ref{section:densities} for the statement. In addition Klartag solved Guth's problem on waists of parallelepipeds in $\mathbb{R}^n$ %for the waists defined by Riemannian $k$-volumes, 
and presented some general results for the waists of convex bodies that we also consider in Section~\ref{section:norm}.

In \cite{ak2016ball} the current authors proved that the $k$-waist of the unit ball (in the sense of the lower Minkowski content) in $\mathbb{R}^n$ equals the volume of the $k$-dimension unit ball. That proof is based on an application of the Archimedes lemma about projecting the uniform measure of $\mathbb S^{n+1}$ to a uniform measure in the Euclidean ball $B^n$. Remark \ref{remark:no-neighborhood-version} shows that in fact it is impossible to make a tight $t$-neighborhood version of the waist theorem for the Euclidean ball, thus justifying the passage to the lower Minkowski content version.

Other estimates of waists in different situations were obtained in \cite{ahk2016}, including more generalizations of Vaaler's theorem, discussion of the waist in terms of ``families of cycles'', and some results on waists in terms of the Hausdorff measure of the fiber. In particular, it was proved that the $1$-waist (in terms of the Hausdorff measure) of a convex body in $\mathbb{R}^n$ coincides with the width of the body.

In the review \cite[2.6]{gromov2014number} Gromov rises the question of finding the waist of balls in symmetric spaces; we answer this question for the cases of the sphere and the hyperbolic space:

\begin{theorem}
\label{theorem:model-ball-waist}
% Let ball $B_M(R)\subset \mathbb M^n_{\kappa}$, of radius $R$, where $R<\pi/\sqrt{\kappa}$, and $Y$ an $(n-k)$ dimensional manifold and $k>0$.
% Then for any continuous map $f : B(R) \to Y$ it is possible to find $y\in Y$ such that $\lmink_k f^{-1}(y)$ is at least the volume of the $k$-dimensional ball of radius $R$ in $M^k_{\kappa}$.
\emph{(The spherical case)}
Let $B(R)\subset \mathbb{S}^n$ be a ball of radius $R$, $R<\pi$, $Y$ an $(n-k)$ dimensional manifold, and $k>0$.
Then for any continuous map $f : B(R) \to Y$ it is possible to find $y\in Y$ such that $\lmink_k f^{-1}(y)$ is at least the volume of the $k$-dimensional ball of radius $R$ in $\mathbb{S}^{k}$.

\emph{(The hyperbolic case)}
Let $B(R)\subset \mathbb{H}^n$ be a ball of radius $R$, $Y$ an $(n-k)$ dimensional manifold, and $k>0$.
Then for any continuous map $f : B(R) \to Y$ it is possible to find $y\in Y$ such that $\lmink_k f^{-1}(y)$ is at least the volume of the $k$-dimensional ball of radius $R$ in $\mathbb{H}^{k}$.
\end{theorem}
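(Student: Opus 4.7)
The plan is to follow the strategy of Klartag~\cite{klartag2016} for the cube and of the authors in~\cite{ak2016ball} for the Euclidean ball: prove the waist estimate on a model space carrying a known waist theorem, and transport it to $B(R)$ via a $1$-Lipschitz, measure-transporting radial map. The key observation underlying such a transport is that if $T : M_0 \to M$ is $1$-Lipschitz and pushes a measure $\mu_0$ on $M_0$ forward to $\vol_M$, then for any continuous $f : M \to Y$ and $\tilde f := f \circ T$, the inclusion $\nu_t(\tilde f^{-1}(y), M_0) \subseteq T^{-1}(\nu_t(f^{-1}(y), M))$ combined with the pushforward identity gives $\mu_0(\nu_t(\tilde f^{-1}(y), M_0)) \le \vol(\nu_t(f^{-1}(y), M))$, and a waist-type lower bound on the left transfers to the right.

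For the spherical case, I would apply this to a weighted version of the Gromov--Memarian waist theorem on $\mathbb{S}^n$ equipped with a rotationally symmetric density $\rho$ around a pole $N$: every continuous $g : \mathbb{S}^n \to Y$ with $\dim Y = n-k$ admits $y \in Y$ such that the $\rho$-weighted lower Minkowski content of $g^{-1}(y)$ is at least the $\rho$-weighted $k$-volume of the equatorial $\mathbb{S}^k$ through $N$. Such a weighted refinement should follow from a direct inspection of Memarian's pancake decomposition argument~\cite{mem2009}, since the symmetrization steps preserve the rotational symmetry of $\rho$. The transport $T$ is a radially symmetric map sending the point at geodesic distance $\theta$ from $N$ to the point at distance $r(\theta)$ from the center of $B(R)$, with $r(\theta)$ chosen so that $T$ is $1$-Lipschitz and pushes $\rho$ times the sphere volume onto the Riemannian volume on $B(R)$. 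The profile is arranged so that the equatorial $\mathbb{S}^k$ maps onto the totally geodesic $k$-ball of radius $R$ inside $B(R)$, matching the claimed extremizer.

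For the hyperbolic case, the analogous plan takes as source $\mathbb{R}^n$ endowed with the standard Gaussian measure, invoking the Gromov Gaussian waist theorem~\cite{grom2003} whose extremizer is a $k$-dimensional linear subspace through the origin. One constructs a radially symmetric $1$-Lipschitz transport $T : \mathbb{R}^n \to B(R) \subset \mathbb{H}^n$ pushing the Gaussian onto the uniform measure on $B(R)$ and carrying the linear $k$-subspace onto the totally geodesic $k$-ball of radius $R$ inside $B(R)$; the same pushforward-and-contraction argument then concludes.

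The principal difficulty is the simultaneous realizability of the radial profile $r(\theta)$ as both $1$-Lipschitz (pointwise bounds on $r'(\theta)$ and on the tangential ratio $\sin r(\theta)/\sin\theta$, respectively $\sinh r(|x|)/|x|$) and measure-transporting (a first-order ODE relating $r$ to the density). Compatibility of these two constraints reduces to a convexity/monotonicity property of the radial volume profile of balls in constant-curvature space; in the spherical case, the restriction $R<\pi$ is precisely what guarantees well-controlled Jacobian ratios. A subsidiary technical task is rigorously extending the Gromov--Memarian and Gaussian waist theorems to the required rotationally symmetric weights, which should follow from the pancake decomposition being manifestly compatible with rotational symmetry about the pole.
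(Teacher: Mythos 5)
There is a genuine gap, and it is quantitative, not merely technical: a $1$-Lipschitz, measure-transporting radial map with the properties you require does not exist, so the Klartag-style transport scheme cannot give the sharp constant for balls. Indeed, if $T$ is $1$-Lipschitz and $T_*\mu_0\le\vol$ on the target, then for every compact $E$ in the source $\mu_0(E)\le \mu_0\bigl(T^{-1}(T(E))\bigr)=T_*\mu_0(T(E))\le\vol(T(E))\le\vol(E)$, since $1$-Lipschitz maps between $n$-manifolds do not increase $n$-dimensional Hausdorff measure; hence the source density is pointwise at most $1$. In your hyperbolic plan, with Gaussian $Ae^{-\alpha|x|^2}$, the transported inequality yields in the limit $t\to+0$ only $\lmink_k f^{-1}(y)\ge A(\pi/\alpha)^{k/2}$, so you need $A(\pi/\alpha)^{k/2}\ge \vol_k B^k_{\mathbb H}(R)=:V_k$, while the mass constraint gives $A(\pi/\alpha)^{n/2}\le \vol_n B^n_{\mathbb H}(R)=:V_n$ and the density constraint gives $A\le 1$. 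These three force $V_k^{n/k}\le V_n$, which fails for small $R$ (already for $k=1$, $n=2$: $4R^2\le 2\pi(\cosh R-1)\approx\pi R^2$ is false), and the same computation kills the Euclidean case, which is why \cite{ak2016ball} used the Archimedes projection rather than a Gaussian transport. This is exactly the point where the paper departs from your scheme: its radial map (Theorem~\ref{theorem:density-radial-map}, applied to the stereographic density $\rho(x)=(1+x^2)^{-k}$) is neither $1$-Lipschitz nor measure-non-increasing; it is controlled only through the Jacobian of its restriction to $k$-dimensional subspaces, via the condition $x\phi'(x)>\phi(x)$ and inequality~\eqref{eq:k-measure-transform}, i.e.\ only $k$-volumes, not $n$-volumes or distances, are compared.

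The spherical half of your plan has a second problem: the ``weighted Gromov--Memarian theorem'' for an arbitrary rotationally symmetric density on $\mathbb S^n$, with a great $\mathbb S^k$ through the pole as extremizer, does not follow ``by inspection'' of the pancake argument --- deciding for which rotation-invariant densities the waist is attained at the subspace through the origin is precisely Gromov's Question~3.1 in \cite{grom2003}, and Section~\ref{section:radial} of the paper (the condition $x\phi'(x)>\phi(x)$, verified through the Cheeger--Gromov--Taylor inequality) is the nontrivial sufficient criterion replacing such a claim, even then only for regular maps and Riemannian $k$-volume. Moreover, the same constraints as above ($\rho\le 1$ pointwise, total mass at most $\vol B(R)$, yet $\int_{\mathbb S^k}\rho\,d\vol_k\ge\vol_k B^k(R)$ along the extremal subsphere) leave essentially no freedom: up to rearranging a negligible amount of mass, $\rho$ must be the indicator of the cap itself, so the ``source theorem'' you would need is the statement being proved --- the plan is circular. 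Finally, note that the technicalities you defer (continuous maps, lower Minkowski content) are not avoidable by citation in the paper's actual route either: because its map is discontinuous at one point and not Lipschitz, the paper needs Theorems~\ref{theorem:disc-waist} and \ref{theorem:disc-waist-k1} and a re-run of the pancake estimates in Section~\ref{ssec:minkowski-waist}; your framework would indeed bypass this if the transport existed, but it cannot (compare also Remark~\ref{remark:no-neighborhood-version}).
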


The idea of the proof is to radially shrink the measure coming from the stereographic projection of the sphere and apply the Gromov--Memarian waist theorem. In fact, this idea was outlined in~\cite[(e) on pages 477--478]{grom2010} for the Euclidean and the hyperbolic case for the waist of sufficiently regular maps. In this paper we show that it also works for the spherical case and extends to continuous maps with the Minkowski content of the fibers. 

\subsection{Results on $\CAT(\kappa)$ spaces}

As a corollary we obtain some results for metric balls in $\CAT(\kappa)$ Riemannian manifolds. Let us remind that a Riemannian manifold is a $\CAT(\kappa)$-space (see~\cite{grom2001}) if it is complete, simply connected, and has sectional curvature $\le \kappa$ everywhere, see~\cite[Page 118, (a')]{grom2001}.

Denote by $\mathbb M^n_{\kappa}$ the $n$-dimensional hyperbolic space with curvature $\kappa$ in case $\kappa<0$, Euclidean space for $\kappa=0$, and the sphere of radius $1/\sqrt{\kappa}$, for $\kappa>0$.

We will use the following \emph{triangle comparison} property of $\CAT(\kappa)$ spaces (it is the definition of $\CAT(\kappa)$ in~\cite[Page 117--118]{grom2001}): Any triangle $ABC$, with perimeter less than $2\pi/\sqrt{\kappa}$ in case $\kappa>0$, can be compared to a triangle $A'B'C'\subset \mathbb M^2_\kappa$ in the following way. If we have $\dist(A,B) = \dist(A',B')$, $\dist(A,C) = \dist(A',C')$ and the angle between the shortest segments $[A,B]$ and $[A,C]$ is the same as the angle between $[A',B']$ and $[A',C']$ then $\dist(B,C) \ge \dist(B',C')$.
It follows easily from the angle comparison property (see \cite[Theorem 3.9.1.]{toponogov2006differential}) and the monotone dependence of the length of $[B'C']$ on the angle $A'$.
Now we state the results for $\CAT(\kappa)$ spaces:
% Similarly, a $\CAT(0)$ metric space is defined by comparison with $\mathbb R^n$ and without the distance restriction by $\pi$; $\CAT(-1)$ is defined by comparison with the hyperbolic space $\mathbb H^n$.

\begin{theorem}
\label{theorem:CAT-waist}
Let $M$ be an $n$-dimensional complete Riemannian manifold, which is $\CAT(\kappa)$; and let $B_M(R)\subset M$ be a ball of radius $R$ there, $R<\pi/\sqrt{\kappa}$ in case $\kappa>0$. Then for any continuous map $f : B_M(R) \to Y$, with $Y$ an $(n-k)$-dimensional manifold and $k>0$, it is possible to find $y\in Y$ such that $\umink_k f^{-1}(y)$ is at least the volume of the $k$-dimensional ball in the model space~$\mathbb M_{\kappa}^k$.
\end{theorem}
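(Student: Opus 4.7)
The plan is to reduce the CAT($\kappa$) statement to Theorem~\ref{theorem:model-ball-waist} by means of the radial comparison diffeomorphism from the model ball to $B_M(R)$. Let $o$ denote the center of $B_M(R)$ and $o'$ the center of $B(R)\subset \mathbb M^n_\kappa$. Fix any linear isometry $T_{o'}\mathbb M^n_\kappa \to T_o M$ and set
\[
\phi = \exp_o \circ\, \exp_{o'}^{-1}\colon B(R)\to B_M(R).
\]
Because $M$ is $\CAT(\kappa)$ and simply connected (with $R<\pi/\sqrt\kappa$ when $\kappa>0$), $\exp_o$ is a diffeomorphism on the open radius-$R$ ball of $T_o M$, hence so is $\phi$; by construction it preserves the distance to the center and the angle at the center.

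Two properties of $\phi$ are essential. First, applying the SAS triangle comparison recalled just before the theorem to the triangles $o' p_1 p_2 \subset \mathbb M^n_\kappa$ and $o\,\phi(p_1)\,\phi(p_2) \subset M$ (same two sides from the apex, same apex angle) gives $\dist_M(\phi(p_1), \phi(p_2)) \ge \dist_{\mathbb M^n_\kappa}(p_1, p_2)$, so $\phi$ is non-contracting and $\phi^{-1}$ is $1$-Lipschitz. Second, by a Rauch-type Jacobi-field comparison (the infinitesimal form of CAT$(\kappa)$), one has $|\det D\phi|\ge 1$ pointwise; equivalently, the pulled-back metric $\phi^* g_M$ dominates the model metric $g_{\mathbb M^n_\kappa}$ pointwise as a quadratic form on $B(R)$.

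Given $f\colon B_M(R)\to Y$, apply Theorem~\ref{theorem:model-ball-waist} to $f\circ\phi \colon B(R)\to Y$ to obtain $y\in Y$ with $\lmink_k((f\circ\phi)^{-1}(y),\mathbb M^n_\kappa)\ge V$, where $V$ denotes the volume of the $k$-dimensional ball of radius $R$ in $\mathbb M^k_\kappa$. Writing $X = f^{-1}(y)$ and $\tilde X = \phi^{-1}(X)$, the problem reduces to the transfer inequality
\[
\umink_k(X, M)\;\ge\;\lmink_k(\tilde X,\, \mathbb M^n_\kappa).
\]

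This last step is the main obstacle. The difficulty is that the $M$-metric shrinks $t$-neighborhoods as sets but simultaneously enlarges the volume density, and a naive set-theoretic bound of the form $\nu_t(X,M)\subseteq \phi(\nu_t(\tilde X,\mathbb M^n_\kappa))$ together with $\vol_M\!\circ\phi\ge\vol_{\mathbb M^n_\kappa}$ is not by itself enough to close the inequality; a purely global Lipschitz estimate of $\phi$ would only yield the bound up to a $\|D\phi\|^{n-k}$ factor. I would carry out the argument in polar coordinates around $o$, in which $\phi$ acts as the identity on the underlying set $[0,R)\times \mathbb S^{n-1}$ and the comparison $\phi^* g_M \ge g_{\mathbb M^n_\kappa}$ is truly pointwise: fibering $\nu_t(X,M)$ via nearest-point projection onto $X$ (or a rectifiable approximant), the transverse $(n-k)$-balls of $g_M$-radius $t$ carry $g_M$-volume $v_{n-k}t^{n-k}$ exactly, while the tangential $k$-volume element of $\phi^* g_M$ dominates its model counterpart. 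Passing to $\limsup_{t\to 0^+}$ then absorbs the lower-order cross terms that come from the variable local Lipschitz constant of $\phi$, which is precisely why the conclusion must be stated with $\umink_k$ rather than $\lmink_k$.
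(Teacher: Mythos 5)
Your reduction is the same as the paper's: the map $\phi=\exp_o\circ\exp_{o'}^{-1}$ is exactly the paper's Lemma~\ref{lemma:cat-anti-lipschitz} (anti-$1$-Lipschitz by SAS triangle comparison), and applying Theorem~\ref{theorem:model-ball-waist} to $f\circ\phi$ is the same first step. The problem is the step you yourself flag as ``the main obstacle'': the transfer inequality $\umink_k(X,M)\ge \lmink_k(\tilde X,\mathbb M^n_\kappa)$, i.e.\ the monotonicity of Minkowski content under the $1$-Lipschitz map $\phi^{-1}$. This is precisely the paper's Theorem~\ref{theorem:mink-monotone}, and it is the genuinely hard part of the whole argument: the paper proves it by isometrically embedding both manifolds into a large $\mathbb R^L$ (Nash), interpolating by $f_\alpha(x)=\cos\alpha\,x\oplus\sin\alpha\,f(x)$, and invoking Csik\'os's Kneser--Poulsen-type theorem that the volume of a union of balls does not increase when the pairwise distances of the centers do not increase. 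Nothing in your proposal replaces this global input.

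Your sketched local substitute does not work. The fiber $X=f^{-1}(y)$ of a merely continuous map is an arbitrary compact set: there is no nearest-point fibration of $\nu_t(X,M)$ with transverse $(n-k)$-disks, no ``rectifiable approximant'' whose Minkowski content controls that of $X$ (lower Minkowski content is not even additive, as the paper notes in Section~\ref{ssec:minkowski-waist}), and the claim that the transverse slices carry $g_M$-volume exactly $v_{n-k}t^{n-k}$ is false in a curved manifold and meaningless for such $X$. The statement that passing to $\limsup_{t\to0^+}$ ``absorbs the lower-order cross terms'' is not an argument: the tension you correctly identified --- $\phi^{-1}$ shrinks $t$-neighborhoods as sets while $|\det D\phi|\ge1$ enlarges the density, with the two effects pulling in opposite directions by unboundedly varying local factors --- is exactly what a pointwise comparison cannot resolve, which is why the paper resorts to the Euclidean embedding and the Kneser--Poulsen machinery (and why the conclusion only holds for $\umink_k$, since Theorem~\ref{theorem:mink-monotone} bounds $\lmink_k$ of the image by $\umink_k$ of the source). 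To complete your proof you must either quote Theorem~\ref{theorem:mink-monotone} (or the planar/Euclidean special cases of Section~\ref{sec:monotonicity}) or reprove an equivalent monotonicity statement; the polar-coordinate heuristic cannot be made rigorous at the stated level of generality.
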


We also state the following consequence of the above theorem for the whole $\CAT(1)$ manifold:
% , which we will also discuss separately.

\begin{corollary}
\label{corollary:cat1waist}
Let $M$ be an $n$-dimensional complete Riemannian manifold, which is $\CAT(1)$. Then for any continuous map $f : M \to Y$, with $Y$ an $(n-k)$ dimensional manifold and $k>0$, it is possible to find $y\in Y$ such that 
\[
\umink_k f^{-1}(y) \ge \vol_k \mathbb S^k.
\]
\end{corollary}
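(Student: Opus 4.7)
The plan is to deduce Corollary~\ref{corollary:cat1waist} from Theorem~\ref{theorem:CAT-waist} by exhausting $M$ with geodesic balls of radius $R\nearrow\pi$. The model-space ball of radius $R$ in $\mathbb{S}^k$, which I denote $B^k_1(R)$, fills up $\mathbb{S}^k$ as $R\to\pi$ (missing only a null set), so its volume tends to $\vol_k\mathbb{S}^k$, the target bound.

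Fix a base point $p\in M$. For every $R<\pi$ apply Theorem~\ref{theorem:CAT-waist} (with $\kappa=1$) to the restriction $f|_{B_M(p,R)}$ to obtain $y_R\in Y$ such that, writing $A_R:=f^{-1}(y_R)\cap B_M(p,R)$,
\[
\umink_k\bigl(A_R,\,B_M(p,R)\bigr)\;\ge\;\vol_k B^k_1(R).
\]
The $t$-neighbourhood of $A_R$ inside the subspace $B_M(p,R)$ is contained in its $t$-neighbourhood inside $M$, so the same inequality persists with ambient $M$; combining this with the trivial subset monotonicity $\umink_k(f^{-1}(y_R),M)\ge\umink_k(A_R,M)$ gives $\umink_k f^{-1}(y_R)\ge\vol_k B^k_1(R)$.

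To produce a single $y^{*}\in Y$ realising the limiting inequality, choose a sequence $R_j\nearrow\pi$. By Hopf--Rinow, the closed ball $\overline{B_M(p,\pi)}$ is compact in the complete manifold $M$, so $f(\overline{B_M(p,\pi)})\subseteq Y$ is compact and contains every $y_{R_j}$; passing to a subsequence one arranges $y_{R_j}\to y^{*}\in Y$ and $A_{R_j}\to A^{*}$ in the Hausdorff metric, and continuity of $f$ forces $A^{*}\subseteq f^{-1}(y^{*})$. The main obstacle is transferring the Minkowski content bound to the limit: $\umink_k$ is not upper semicontinuous under Hausdorff convergence of sets, nor is $y\mapsto\umink_k f^{-1}(y)$ upper semicontinuous, so one cannot simply take $\limsup$ in $j$. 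I would attempt to circumvent this by extracting from the proof of Theorem~\ref{theorem:CAT-waist} a uniform fixed-$t$ volume estimate $\vol\nu_t(A_{R_j},M)\ge v_{n-k}t^{n-k}\vol_k B^k_1(R_j)$ valid for every sufficiently small $t>0$, then sending $j\to\infty$ at fixed $t$ before letting $t\to 0$; alternatively a diagonal extraction coupling $t_j\to 0$ with $R_j\nearrow\pi$ would transfer the $\limsup$ bound directly to $A^{*}\subseteq f^{-1}(y^{*})$.
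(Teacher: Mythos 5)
Your overall route coincides with the paper's one-line reduction (exhaust $M$ by balls and let $R\to\pi$ in Theorem~\ref{theorem:CAT-waist}), and the preparatory steps are fine: the transfer of the bound from ambient $B_M(p,R)$ to ambient $M$, and the compactness of $f(\overline{B_M(p,\pi)})$ via Hopf--Rinow giving a convergent subsequence $y_{R_j}\to y^*$. But the decisive step --- transferring the bound $\umink_k f^{-1}(y_{R_j})\ge \vol_k B^k(R_j)$ to the single point $y^*$ --- is left open, as you yourself flag, and neither of your proposed repairs is available. (i) The conclusion of Theorem~\ref{theorem:CAT-waist} is an \emph{upper} Minkowski content, i.e.\ a $\limsup$ as $t\to+0$: even for one fixed $j$ it only yields a sequence of good scales $t\to 0$ along which $\vol \nu_t(A_{R_j},M)$ is large, never an estimate ``for every sufficiently small $t$'', let alone one uniform in $j$. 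Nothing stronger can be mined from the proof: the model-side bound in Section~\ref{ssec:minkowski-waist} is itself purely asymptotic in $t$, and Theorem~\ref{theorem:mink-monotone} compares Minkowski contents, not volumes of fixed $t$-neighbourhoods in $M$. Indeed a ``for all small $t$'' bound of the kind you postulate would in particular give the lower-Minkowski-content version of Theorem~\ref{theorem:CAT-waist}, which the authors explicitly say they could not establish, and Remark~\ref{remark:no-neighborhood-version} shows that tight fixed-$t$ statements already fail for the Euclidean ball. (ii) The diagonal extraction needs, for each $j$, a good scale $t_j$ of $A_{R_j}$ that dominates the Hausdorff distance between $A_{R_j}$ and $A^*$; the $\limsup$ gives no control on where the good scales sit, and since (as you correctly note) $y\mapsto \umink_k f^{-1}(y)$ is not upper semicontinuous, no soft selection argument can finish. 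So as written the proposal has a genuine gap at its main step.

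The reason the corollary is ``immediate'' in the paper is that one passes to the limit $R\to\pi$ inside the proof of Theorem~\ref{theorem:CAT-waist}, not in its conclusion, so no limit of the chosen points $y_R$ is ever taken. The metric ball of radius $\pi$ in the model $\mathbb S^n$ is the sphere minus one point, and at $R=\pi$ the radial reweighting of Section~\ref{section:model-riemannian-balls} degenerates to the round sphere itself; Lemma~\ref{lemma:cat-anti-lipschitz}, applied for every $R<\pi$, gives the anti-$1$-Lipschitz map $h=\exp_p\circ\exp_q^{-1}$ on the whole open $\pi$-ball, and Theorems~\ref{theorem:disc-waist} and~\ref{theorem:disc-waist-k1} --- proved precisely to handle maps undefined at a point --- applied to $f\circ h$ produce a single $y$ and a single spherical fiber $X$ with $\lmink_k X\ge \vol_k\mathbb S^k$. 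Theorem~\ref{theorem:mink-monotone} then converts this into $\umink_k f^{-1}(y)\ge \vol_k\mathbb S^k$ for that one $y=f(h(X))$. This single application at $R=\pi$ is what you should substitute for the exhaustion; it sidesteps exactly the semicontinuity problem on which your argument stalls.
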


We conjecture that the above two results, proved for the upper Minkowski content, hold for the lower Minkowski content as well, but could not handle the technicalities to establish this. 

\subsection{Outline of the paper}

The paper is organized in the following way: In Section~\ref{section:densities} we remind the definition of the Minkowski content for metric spaces with a density. In Section~\ref{section:radial} we show that certain type of radial transformations of a density in $\mathbb{R}^n$ preserves the waist inequality. After that, in Section~\ref{section:model-riemannian-balls} we apply this transformation to the densities in the Euclidean space obtained by the conformal projection from the model spaces. This proves Theorem~\ref{theorem:model-ball-waist} for sufficiently regular maps and the waist in terms of the Riemannian $k$-volume. In the end of this section we give proofs of Theorem \ref{theorem:CAT-waist} and Corollary \ref{corollary:cat1waist} for sufficiently regular maps.

Section~\ref{sec:mink-waist} addresses the waist in terms of the lower Minkowski content by modifying some parts of the Gromov--Memarian argument. In \ref{sec:gromov-discontinius} we show how to change the proof of the Gromov--Memarian theorem so that it remains valid for certain discontinuous maps from the sphere. In \ref{ssec:minkowski-waist} we give the proof of Theorem~\ref{theorem:model-ball-waist} in its full form. In Section~\ref{section:norm} we give a version of the waist theorem for normed spaces, similar to a result by Klartag.

In the Section \ref{sec:monotonicity} we discuss the monotonicity of the Minkowski content under $1$-Lipschitz maps, prove Theorem~\ref{theorem:CAT-waist}, and define a Gaussian version of the Minkowski content that behaves better in terms of monotonicity and taking Cartesian products.

%\begin{remark}
%In case $M$ is the model constant curvature space itself, we prove the estimate for $\lmink_k f^{-1}(y)$ (the lower Minkowski content) in fact, see Theorem~\ref{theorem:minkowski}.
%\end{remark}

\subsection{Acknowledgments.}
The authors thank Anton Petrunin for pointing out the properties of $\CAT(\kappa)$ spaces that we use and the unknown referees for numerous useful remarks and corrections.

\section{Some observations and notation}
\label{section:densities}

If we consider the problem for sufficiently regular smooth maps and measure the fiber with the Riemannian $k$-volume then the general idea becomes very clear. Suppose that $X_1$ and $X_2$ are equipped with a Riemannian metric and a smooth one-to-one map $h:X_2 \rightarrow X_1$ does not decrease $k$-volumes, this assumption has an expression in terms of the derivative of the map. Then the waist of $X_2$ is no less than the waist of $X_1$. Indeed, for any map $f:X_2 \rightarrow Y$, there is a map $h^{-1}\circ f:X_1 \rightarrow Y$, for which there is a $y\in Y$ with big $k$-dimensional volume of $h^{-1}\circ f(y)$. Since $h$ does not decrease $k$-volumes, we obtain that $f^{-1}(y)$ has also a big $k$-volume.

The map constructed in the further sections can be considered as a map for spherical and hyperbolical balls as $X_2$ and the sphere without a point as $X_1$, for which we modify the Gromov--Memarian theorem. For the proof of Theorem~\ref{theorem:CAT-waist} we use balls in model spaces as $X_1$ and connect $X_1$ and $X_2$ through exponential maps.

In the middle of our argument there also appear Riemannian volumes with densities, so we make some general remarks first. In~\cite[Question 3.1]{grom2003} Gromov asked about the assumptions on a rotation invariant density in $\mathbb{R}^n$ (and a rotation invariant Riemannian metric) that guarantee a $k$-waist inequality with the minimum attained at the $k$-plane through the origin. We consider $\mathbb R^n$ with the standard Euclidean metric and a rotation invariant density and give examples where the waist defined in terms of the weighted Riemannian $k$-volume is indeed attained at any linear $k$-subspace through the origin.

Now let us choose some notation. We consider a density $\rho$ on an $n$-dimensional Riemannian manifold $M$ and define the \emph{$k$-dimensional weighted Riemannian volume} of a $k$-dimensional (piece-wise) smooth submanifold $X\subset M$ as
\[
\vol_{k, \rho} X = \int_X \rho \vol_{g|_X},
\]
where $\vol_{g|_X}$ is the Riemannian $k$-dimensional density on $X$ corresponding to the restriction of $g$ to $X$, given in local coordinates $u_1,\ldots, u_k$ as $\sqrt{\det g|_X} du_1\dots du_k$. 

We will frequently use the fact that the Riemannian volume for nice sets (like piece-wise smooth manifolds) can be calculated as the Minkowski content, so that for its weighted version we have the formula:
\begin{equation}
\label{eq:mink-content}
\vol_{k, \rho} = \lim_{t\to +0} \frac{\int_{\nu_t (X, M)} \rho \vol_g}{v_{n-k}t^{n-k}},
\end{equation}
where $\nu_t$ denotes the $t$-neighborhood of $X$ in $M$, $\vol_g$ is the Riemannian density in $M$ corresponding to the metric $g$, and $v_\ell$ is the volume of the unit $\ell$-dimensional Euclidean ball.
We will use the notation $\vol_{n,\rho} = \rho \vol_g$ to simplify the formulas and will simply write for $n$-dimensional subsets $U\subseteq M$, $\vol_{n,\rho} (U) = \int_U \rho \vol_g$.
	
In~\cite{grom2003} Gromov proved the waist theorem for the Gaussian measure: Given a continuous map $f : \mathbb{R}^n \to Y$, where $\mathbb R^{n}$ is equipped with a Gaussian density $\rho = A e^{-\alpha |x|^2}$ ($\alpha, A > 0$) and $Y$ is an $(n-k)$-dimensional manifold, it is possible to find $y\in Y$ such that 
\[
\vol_{n,\rho} \nu_t(f^{-1}(y), \mathbb R^n) \ge \vol_{n,\rho} \nu_t(\mathbb R^k, \mathbb R^n).
\]
Speaking in terms of the weighted (lower) Minkowski content, we may conclude that 
\[
\lmink_{k,\rho} f^{-1}(y) \ge \lmink_{k,\rho} \mathbb R^k,
\]
if we put for a Riemannian manifold $M$ with a density $\rho$
\[
\lmink_{k,\rho} (X, M) = \liminf_{t\to+0} \frac{\vol_{n, \rho} \nu_t(X, M)}{v_{n-k}t^{n-k}}\quad\text{and}\quad \umink_{k,\rho} (X, M) = \limsup_{t\to+0} \frac{\vol_{n, \rho} \nu_t(X, M)}{v_{n-k}t^{n-k}}.
\]

% Suppose $\mu$ be a denisty in $\mathbb{R}^n$ and for any regular $k$-dimensional set $X$ in $\mathbb{R}^n$ we define $k$-dirhomensional measure $\mu(X)$ as $\int_X (\mu(x)dx$.

\section{Behaviour of waists with weights under radial maps}	
\label{section:radial}

In this section we consider the Euclidean space with a density. By the $k$-waist of a density $\rho$ we call the infimum of the numbers of $w>0$ such that there is a regular (piece-wise real-analytic) map to a smooth manifold $\mathbb{R}^n \rightarrow Y^{n-k}$, such that the $k$-dimensional $\rho$-weighted volume of the fibers $f^{-1}(y)$ for all $y\in Y^{n-k}$ is not greater than $w$. The assumption that the map is piece-wise real analytic allows us to think of the fibers $f^{-1}(y)$ as piece-wise real-analytic submanifolds of certain dimension.

Suppose we are given a radial (only depending on the distance to the origin) density $\rho$, for which we know that its $k$-waist equals to $\vol_{k, \rho}(\mathbb{R}^k)$, here $\mathbb R^k\subset \mathbb R^n$ is a linear subspace. Our goal is to infer the same statement for another appropriately chosen radial density $\sigma$, connected to $\rho$ in the way we describe below.

Suppose $\phi:\mathbb{R}_{+} \rightarrow \mathbb{R}_{+}$ is an increasing continuous function, with $\phi(0)=0$ and having property:
\begin{equation}
	\label{eq:condition on function f}
	x \phi'(x)>\phi(x).
\end{equation}
This property has a simple geometric meaning: Any tangent line to the graph of $\phi$ passes below the origin.

Now, consider the map $F:\mathbb{R}^n \rightarrow \mathbb{R}^n$ that maps any point $\textbf{x}\in \mathbb{R}^n$ to $\phi(|\textbf{x}|)\frac{\textbf{x}}{|\textbf{x}|}$, which we assume to be extended continuously so that $F(0) = 0$. The following formulas are written for $\textbf{x}\neq 0$, this does not affect the argument since we are mostly integrating something.

The derivative of $F$ has eigenvalue $\phi'(|\mathbf{x}|)$ in the radial direction $\frac{\textbf{x}}{|\textbf{x}|}$; and eigenvalues $\frac{\phi(|\mathbf{x}|)}{|\mathbf{x}|}$ ($d-1$ times) in the directions orthogonal to the radial direction. It is easy to see that \eqref{eq:condition on function f} is equivalent to saying that the largest eigenvalue of the derivative $DF$ is $\phi'(|\mathbf{x}|)$ in the radial direction. From the description of the eigenvalues of $DF$ we have that the restriction of $D_{\mathbf{x}}F$ to any $k$-dimensional linear subspace (with its inherent Euclidean structure) has determinant at most $\phi'(|\mathbf{x}|) \left(\frac{\phi(|\mathbf{x}|)}{|\mathbf{x}|}\right)^{k-1}$ by its absolute value and hence for any $k$-dimensional submanifold $X$ we have:
\begin{multline}
\label{eq:k-measure-transform}
\int_{F(X)} \rho(\mathbf y)d\vol_k(\mathbf y) = \int_X \rho(F(\mathbf x)) \left| \det D_{\mathbf{x}}F|_X\right| d\vol_k(\mathbf x) \le\\
\le  \int_X \phi'(|\mathbf{x}|)\left(\frac{\phi(|\mathbf{x}|)}{|\mathbf{x}|} \right)^{k-1} \rho(F(\mathbf x))d\vol_k(\mathbf x).
\end{multline}

Suppose we are given a density $\sigma$ and a function $\phi$ such that
\begin{equation}
	\label{eq:definition of mu}
	 \sigma(\mathbf{x}) = \phi'(|\mathbf{x}|) \left(\frac{\phi(|\mathbf{x}|)}{|\mathbf{x}|}\right)^{k-1}  \rho(F(\mathbf{x})).
\end{equation}
Note that $\vol_{k,\sigma}(\mathbb{R}^k)=\vol_{k,\rho}(\mathbb{R}^k)$. Indeed, 
%\begin{equation}
\begin{multline*}
	\vol_{k, \sigma}(\mathbb{R}^k)=
	\int_{\mathbb{R}^k} \sigma(\mathbf{x}) d\vol_k(\mathbf x)=
	\int_{\mathbb{R}^k} 
		\rho(F(\mathbf{x})) \phi'(|\mathbf{x}|) \left(\frac{\phi(|\mathbf{x}|)}{|\mathbf{x}|}\right)^{k-1} 
	d\vol_k(\mathbf x)=\\
	=
	\int_{\mathbb{R}^k} \rho(\mathbf y) d\vol_k(\mathbf y)=
	\vol_{k,\rho} (\mathbb{R}^k)
\end{multline*}
%\end{equation}

Assume the contrary to what we want to prove: The $k$-waist of $\sigma$ is less than $\vol_{k,\sigma} (\mathbb{R}^k)=\vol_{k,\rho}(\mathbb{R}^k)$. This means the existence of a regular map $f:\mathbb{R}^n \rightarrow Y^{n-k}$ for which the preimage of any point has $\sigma$-weighted $k$-volume smaller than $\vol_{k,\sigma}(\mathbb{R}^k)-\varepsilon$ for some $\varepsilon>0$. Consider the map $s=f\circ F^{-1}$. By the assumption on $\rho$, for some $y\in Y$, the measure of $s^{-1}(y)$ is at least $\vol_{k\,\rho}(\mathbb{R}^k)$. Taking into account \eqref{eq:k-measure-transform}, we obtain

\begin{multline*}
\vol_{k,\rho} (\mathbb{R}^k)\leq
\int_{s^{-1}(y)}\rho(\mathbf{y})d\vol_k(\mathbf y)
=
\int_{f^{-1}(y)}
 \rho(F(\mathbf{x})) \left|\det D_{\mathbf{x}}F|_{f^{-1}(y)}\right| d\vol_k(\mathbf x)
\leq\\
\leq \int_{f^{-1}(y)}
\left(\frac{1}{\phi'(|\mathbf{x}|)}\left(\frac{|\mathbf{x}|}{\phi(|\mathbf{x}|)}\right)^{k-1}
\sigma(\mathbf{x})\right) \left(\phi'(|\mathbf{x}|) \left(\frac{\phi(|\mathbf{x}|)}{|\mathbf{x}|}\right)^{k-1}\right)  d\vol_k(\mathbf x)=\\
=\int_{f^{-1}(y)} \sigma({\mathbf{x}}) d\vol_k(\mathbf x)
\leq k \text{-waist of }\sigma.
\end{multline*}

We have a contradiction. Thus we have proved:

\begin{theorem}
\label{theorem:density-radial-map}
If the $k$-waist of a radial density $\rho$ equals $\vol_{k,\rho}(\mathbb R^k)$, a radial map $F$ is given by the function $\phi$ with $x \phi'(x)>\phi(x)$, and another density $\sigma$ satisfies 
\[
\sigma(\mathbf{x}) = \phi'(|\mathbf{x}|) \left(\frac{\phi(|\mathbf{x}|)}{|\mathbf{x}|}\right)^{k-1}  \rho(F(\mathbf{x})),
\]
then the density $\sigma$ also has $k$-waist equal to $\vol_{k,\sigma}(\mathbb R^k) = \vol_{k,\rho}(\mathbb R^k)$. In this theorem we speak about $k$-waist for sufficiently regular maps and fibers in terms of their Riemannian $k$-volume.
\end{theorem}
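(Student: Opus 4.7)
The plan is to transport a hypothetical waist-violating map for $\sigma$ back to the $\rho$-weighted space via $F^{-1}$ and derive a contradiction from the explicit Jacobian bound that \eqref{eq:condition on function f} provides. Two preparatory observations set things up. First, the equality $\vol_{k,\sigma}(\mathbb{R}^k) = \vol_{k,\rho}(\mathbb{R}^k)$ is immediate from the change of variables under $F|_{\mathbb{R}^k}$, since $F$ preserves the linear subspace $\mathbb{R}^k$ and the formula defining $\sigma$ is tuned precisely so that $\sigma\,d\vol_k$ on the source matches $\rho\,d\vol_k$ on the image. Second, at any $\mathbf{x}\neq 0$ the derivative $DF$ has eigenvalue $\phi'(|\mathbf{x}|)$ along the radial direction and eigenvalue $\phi(|\mathbf{x}|)/|\mathbf{x}|$ along each tangential direction; condition \eqref{eq:condition on function f} says exactly that the radial eigenvalue is the largest, so by Hadamard's inequality (choose an orthonormal frame for $V$ starting with the radial vector when possible) the Jacobian $|\det DF|_V|$ on any $k$-plane $V\subset T_{\mathbf{x}}\mathbb{R}^n$ is at most $\phi'(|\mathbf{x}|)(\phi(|\mathbf{x}|)/|\mathbf{x}|)^{k-1}$.

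Now for the contradiction. Suppose the $k$-waist of $\sigma$ were strictly less than $\vol_{k,\sigma}(\mathbb{R}^k)$, witnessed by a piece-wise real-analytic map $f:\mathbb{R}^n\to Y^{n-k}$ whose fibers all have $\sigma$-weighted $k$-volume at most $\vol_{k,\sigma}(\mathbb{R}^k)-\epsilon$ for some $\epsilon>0$. Composing with $F^{-1}$ (well-defined off the origin, a set that contributes nothing to any $k$-volume) produces $s := f\circ F^{-1}:\mathbb{R}^n\to Y^{n-k}$; applying the waist hypothesis on $\rho$ to $s$ yields a point $y\in Y$ with $\vol_{k,\rho}(s^{-1}(y))\geq \vol_{k,\rho}(\mathbb{R}^k)$. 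Since $s^{-1}(y)=F(f^{-1}(y))$, the change-of-variables formula together with the Jacobian bound from the previous paragraph gives
\[
\vol_{k,\rho}(s^{-1}(y)) \le \int_{f^{-1}(y)} \phi'(|\mathbf{x}|)\left(\frac{\phi(|\mathbf{x}|)}{|\mathbf{x}|}\right)^{k-1}\rho(F(\mathbf{x}))\,d\vol_k = \int_{f^{-1}(y)} \sigma\,d\vol_k \le \vol_{k,\sigma}(\mathbb{R}^k)-\epsilon,
\]
which contradicts the identity $\vol_{k,\sigma}(\mathbb{R}^k)=\vol_{k,\rho}(\mathbb{R}^k)$ established above.

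The main technical nuisance I expect is ensuring the change-of-variables step really is an inequality in the correct direction for the specific piece-wise real-analytic fibers $f^{-1}(y)$: one must take the absolute Jacobian of $DF$ restricted to the tangent $k$-planes of $f^{-1}(y)$, which need not contain the radial direction, and it is precisely the eigenvalue inequality from \eqref{eq:condition on function f} that enforces the pointwise upper bound needed. The singular behaviour of $F$ at the origin is a real but harmless issue, since the origin is a single point and contributes nothing to Riemannian $k$-volumes or to piece-wise analytic fibers of positive dimension.
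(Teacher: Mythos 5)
Your proposal is correct and follows essentially the same route as the paper: establish the eigenvalue structure of $DF$ and the resulting pointwise bound $\left|\det D_{\mathbf{x}}F|_V\right|\le \phi'(|\mathbf{x}|)\left(\frac{\phi(|\mathbf{x}|)}{|\mathbf{x}|}\right)^{k-1}$ on $k$-planes, note $\vol_{k,\sigma}(\mathbb{R}^k)=\vol_{k,\rho}(\mathbb{R}^k)$ by change of variables, and then derive a contradiction by transporting a hypothetical waist-violating map for $\sigma$ through $F^{-1}$ and applying the waist hypothesis for $\rho$ to $s=f\circ F^{-1}$. The only differences are cosmetic (your explicit Hadamard-type justification of the Jacobian bound, which the paper simply asserts from the eigenvalues, and your remark on the harmless singularity at the origin, which the paper dismisses in the same way).
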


In the equality connecting $\sigma$ and $\rho$ it is convenient to replace the densities $\sigma$ and $\rho$ with 
\[
\sigma_k(|\mathbf x|) = \sigma(\mathbf x) |\mathbf x|^{k-1}\quad\text{and}\quad \rho_k(|\mathbf x|) = \rho(\mathbf x) |\mathbf x|^{k-1}
\]

Now \eqref{eq:definition of mu} becomes just
\begin{equation}
	\label{eq:definition of mu_k}
	\sigma_k(x)=\phi'(x)\rho_k(\phi(x)).
\end{equation}
In terms of $\psi=\phi^{-1}$ this becomes
\begin{equation}
	\label{eq:reversedefinition of mu_k}
	\psi'(x)\sigma_k(\psi(x))=\rho_k(x), 
\end{equation}
where $\psi$ will evidently satisfy the opposite assumption
\[
x \psi'(x)<\psi(x).
\]

\begin{remark}
\label{remark:k-ball-to-ball}
The geometric meaning of the integrated \eqref{eq:definition of mu_k} is that $F$ sends any $k$-dimensional Euclidean ball $B^k(r)\subset\mathbb R^n$ centered at the origin to another $k$-dimensional Euclidean ball $B^k(R)$ so that the $\sigma$-weighted $k$-volume of $B^k(r)$ equals the $\rho$-weighted $k$-volume of $B^k(R)$. This can be seen by comparing the derivatives by $r$ and by $R$ of the respective weighted volumes. 

The meaning of the inequality $x \phi'(x)>\phi(x)$ is that the local increase in the $k$-dimensional Riemannian $k$-volume under $F$ is maximal for $k$-submanifolds tangent to the radial direction.
\end{remark}

\section{Proof of the theorems for the $k$-Riemannian volume}
\label{section:model-riemannian-balls}

\subsection{Riemannian $k$-volume version of Theorem~\ref{theorem:model-ball-waist}}

Let us construct certain densities $\rho$ having $k$-dimensional waist equal to $\vol_{k,\rho}(\mathbb R^k)$. What we definitely know is the Gromov--Memarian theorem on the waist of the sphere~\cite{grom2003,mem2009}. Consider the sphere to be given by $|\mathbf x|^2 + y^2=1$ in $\mathbb R^{n+1}$ and project it stereographically from $(\mathbf 0, 1)$ on the plane $y=-1$. This map $S$ will be conformal with factor ${1+|\mathbf x|^2}$ (at the image point $\mathbf x$). Note that discontinuity at one point in the Gromov--Memarian theorem is allowed thanks to Theorems~\ref{theorem:disc-waist} and \ref{theorem:disc-waist-k1} below.

Therefore the $k$-dimensional Riemannian volume on the sphere is transformed to $\vol_{k,\rho}$ with 
\[
\rho(\mathbf x) = \frac{1}{(1+|\mathbf x|^2)^k},
\]
that is the Riemannian $k$-volume of a $k$-dimensional submanifold $X\subset\mathbb S^n$ equals the $\rho$-weighted Riemannian $k$-volume of $S(X)$. This allows us to conclude with

\begin{theorem}
For sufficiently regular maps and fibers in terms of their Riemannian $k$-volume, the $k$-waist of the radial density $\rho(x)=\frac{1}{(1+x^2)^k}$ equals $\vol_{k,\rho}(\mathbb R^k)$.
\end{theorem}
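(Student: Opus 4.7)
The plan is to transport the Gromov--Memarian sphere waist theorem to $(\mathbb{R}^n, \rho)$ via the stereographic projection $S : \mathbb{S}^n \setminus \{N\} \to \mathbb{R}^n$ introduced just above. Since $S$ is conformal with factor $1+|\mathbf{x}|^2$ at the image point $\mathbf{x}$, the round Riemannian $k$-volume of any piece-wise smooth $k$-submanifold of $\mathbb{S}^n$ equals the $\rho$-weighted $k$-volume of its $S$-image in $\mathbb{R}^n$. Moreover, any linear subspace $\mathbb{R}^k \subset \mathbb{R}^n$ lifts under $S^{-1}$ to a great $k$-sphere on $\mathbb{S}^n$ passing through $N$, so $\vol_{k,\rho}(\mathbb{R}^k) = \vol_k(\mathbb{S}^k)$, and it is enough to produce a fiber of $\rho$-weighted $k$-volume at least $\vol_k(\mathbb{S}^k)$.

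Given a sufficiently regular map $g : \mathbb{R}^n \to Y^{n-k}$, I form the composition $f = g \circ S : \mathbb{S}^n \setminus \{N\} \to Y$, which is regular and defined everywhere on the sphere except at $N$. Invoking a version of the Gromov--Memarian theorem that tolerates a single point of discontinuity (Theorems~\ref{theorem:disc-waist} and~\ref{theorem:disc-waist-k1} cited in the paragraph above the statement), we obtain $y \in Y$ with $\vol_k f^{-1}(y) \geq \vol_k \mathbb{S}^k$. The corresponding fiber in $\mathbb{R}^n$ is $g^{-1}(y) = S(f^{-1}(y) \setminus \{N\})$, which differs from $S$ applied to the full spherical fiber by at most one point (a set of $k$-measure zero); by the conformal matching of $k$-volumes its $\rho$-weighted $k$-volume equals $\vol_k f^{-1}(y)$, so
\[
\vol_{k,\rho}\, g^{-1}(y) \;\geq\; \vol_k(\mathbb{S}^k) \;=\; \vol_{k,\rho}(\mathbb{R}^k),
\]
which is the desired bound on the $k$-waist of $\rho$.

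The only genuinely non-routine step is the treatment of the pole $N$: the composition $g \circ S$ is simply undefined there, so without the one-point-discontinuous extension of Gromov--Memarian the strategy collapses. The conformal identification of spherical and $\rho$-weighted Euclidean $k$-volume, the equality $\vol_{k,\rho}(\mathbb{R}^k) = \vol_k(\mathbb{S}^k)$, and the fact that removing one point does not change the $k$-volume of a fiber are essentially automatic. This is precisely why the discontinuous sphere-waist statements are flagged immediately before the formulation of the theorem.
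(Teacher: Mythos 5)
Your proposal is correct and follows essentially the same route as the paper: pull the map back to $\mathbb{S}^n\setminus\{N\}$ through the stereographic projection $S$, use the conformal factor $1+|\mathbf{x}|^2$ to identify spherical Riemannian $k$-volume with $\rho$-weighted $k$-volume, and invoke the one-point-discontinuous versions of the Gromov--Memarian theorem (Theorems~\ref{theorem:disc-waist} and~\ref{theorem:disc-waist-k1}) to produce the heavy fiber. Your explicit verification that a linear $\mathbb{R}^k$ lifts to a great $k$-sphere through $N$, so that $\vol_{k,\rho}(\mathbb{R}^k)=\vol_k(\mathbb{S}^k)$, is a detail the paper leaves implicit but is entirely consistent with its argument.
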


Note that this density depends on $k$ and does not depend on the ambient dimension $n$, so unlike the Gromov--Memarian result, or Gromov's theorem on the waist of the Gaussian density, we use a specific density for every particular value of $k$.

Now we want to start from the established case $\rho(x)=\frac{1}{(1+x^2)^k}$ and move to another density $\sigma(x)$. Denote by $\Sigma(t)$ and $\Rho(t)$ the functions $\int_0^t\sigma_k(x)dx$ and $\int_0^t \rho_k(x)dx$ respectively, recall that we put $\sigma_k(x) = \sigma(x)x^{k-1}$ and $\rho_k(x) = \rho(x)x^{k-1}$. Then we can rewrite \eqref{eq:reversedefinition of mu_k} in the form:
\[
\frac{d}{dx} \Sigma(\psi(x))=\Rho'(x)\ \forall x \Leftrightarrow  \Sigma(\psi(x))=\Rho(x)\ \forall x \Leftrightarrow \psi(x)=\Sigma^{-1}(\Rho(x)).
\]
 
The assumption $x \psi'(x)<\psi(x)$ is equivalent to that $\psi(x)/x$ monotonicaly decrease (just differentiate). Setting $x = \Rho^{-1}(y)$, there remains to prove that
\[
\frac{\psi(x)}{x} = \frac{\Sigma^{-1}(y)}{\Rho^{-1}(y)}
\] 
monotonically decrease.

Since $\psi$ is defined through the equality $\Sigma(\psi(x))= \Rho(x)$, for any $x_1<x_2$, we need to show that $\frac{\psi(x_1)}{x_1}> \frac{\psi(x_2)}{x_2}$. In view of the monotonicity of $\Sigma$, it is sufficient to show that  $\Sigma\left(\frac{\psi(x_1)}{x_1} x_2 \right)$ is greater than $\Rho(x_2)=\Sigma(\psi(x_2))$.
Denote $\frac{\psi(x_1)}{x_1}$ by $c$. % In the examples we consider we will have $c<1$ (check this after we write down the formula).
Our aim is to show
\begin{equation}
	1=\frac{\int\limits_{0}^{cx_1} \sigma_k(t) dt}{\int\limits_{0}^{x_1} \rho_k(t) dt} <?
	\frac{\int\limits_{0}^{cx_2} \sigma_k(t) dt}{\int\limits_{0}^{x_2} \rho_k(t) dt}
\end{equation}
or equivalently
\begin{equation}
	\label{eq:comparison of S}
	\frac{1}{c} =\frac{\int\limits_{0}^{x_1} \sigma_k(ct) dt}{\int\limits_{0}^{x_1} \rho_k(t) dt} <?
	\frac{\int\limits_{0}^{x_2} \sigma_k(ct) dt}{\int\limits_{0}^{x_2} \rho_k(t) dt}
\end{equation}
``Gromov's inequality''~\cite[p.42]{cheeger-gromov-taylor1982} states that it is sufficient to prove that $\frac{\sigma_k(ct)}{ \rho_t(t)}$ is increasing to have the desired inequality.

Now assume we want to estimate the waist of a spherical cap (a ball in the spherical geometry). The whole sphere projects to give the density $\rho(x) = \frac{1}{(1+x^2)^k}$, while the cap will have density $\sigma(x) = \frac{A}{(1+x^2)^k}$ when $x\in [0, R']$ and $0$ for $x > R'$, where $R'=\frac{2}{\cos(R/2)}$ is a radius of the cap after the stereographic projection $S$, and $A>1$ is chosen so that the total integrals of $\rho_k = \frac{x^{k-1}}{(1+x^2)^k}$ and $\sigma_k(x) = \sigma(x)x^{k-1}$ coincide (and equals the volume of $\mathbb{S}^k$).
In this case we have to consider
\begin{equation}
	\label{eq: final spherical density equation}
	\frac{\sigma_k(ct)}{ \rho_k(t)}=
	\frac{
	\frac{A}{(1+(ct)^2)^k}c^{k-1}t^{k-1}
	}{
	\frac{1}{(1+t^2)^k}t^{k-1}
	} = A c^{k-1} \left(\frac{1+t^2}{1+(ct)^2}\right)^k
\end{equation}
and show it is increasing. A simple transformation gives
\begin{equation}
\frac{1+t^2}{1+c^2t^2}=c^{-2}+\frac{1-c^{-2}}{1+c^2t^2},
\end{equation}
which is clearly increasing if we show that $c<1$. 
Indeed, since $A>1$, the density for spherical cap is larger then the density coming from stereographic projection of the whole sphere, therefore for small $x$, $\psi(x)<x$, and as we have seen the ratio $\frac{\psi(x)}{x}$ only decrease in $x$.
This argument proves the spherical case of Theorem~\ref{theorem:model-ball-waist} for a regular map $f$. More precisely, we have proved the following:
% This argument can be summarized as:
%
\begin{proposition}
\label{proposition:cap-waist}
The waist of sufficiently regular maps $C \to Y^{n-k}$ for a spherical cap $C\subset \mathbb S^n$ and the Riemannian $k$-volume in $\mathbb S^n$ is attained at intersections $C\cap \mathbb S^k$ passing through the center of $C$.
\end{proposition}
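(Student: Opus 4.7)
The plan is to transport the spherical cap into a weighted density on $\mathbb{R}^n$ via stereographic projection, and then apply Theorem~\ref{theorem:density-radial-map} with the Gromov--Memarian waist of $\mathbb{S}^n$ playing the role of the known baseline. Concretely, I would stereographically project $\mathbb{S}^n$ from the antipode of the center of $C$ onto $\mathbb{R}^n$: conformality with factor $1+|x|^2$ pushes the spherical $k$-volume forward to the $\rho$-weighted $k$-volume for $\rho(x) = (1+|x|^2)^{-k}$, a density already known by Gromov--Memarian to have $k$-waist $\vol_k \mathbb{S}^k$ (a removable discontinuity at one point is harmless). The cap $C$ itself maps to a Euclidean ball $B(R')$ of radius $R' = 2/\cos(R/2)$, and on it I would define the truncated density $\sigma(x) = A(1+|x|^2)^{-k}\cdot\mathbf{1}_{B(R')}(x)$ with $A>1$ chosen uniquely so that $\vol_{k,\sigma}(\mathbb{R}^k) = \vol_{k,\rho}(\mathbb{R}^k)$. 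The problem reduces to transferring the waist inequality from $\rho$ to $\sigma$.

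To apply Theorem~\ref{theorem:density-radial-map} I need the radial function $\phi$; integrating \eqref{eq:definition of mu_k} forces its inverse to be $\psi = \Sigma^{-1}\circ \Rho$, so existence is automatic. The only hypothesis needing verification is $x\phi'(x) > \phi(x)$, equivalently that $\psi(x)/x$ is decreasing in $x$. By the ``Gromov inequality'' of \cite{cheeger-gromov-taylor1982}, monotonicity of $\psi(x)/x$ follows once we show that for a constant $c = \psi(x_1)/x_1$ the ratio $\sigma_k(ct)/\rho_k(t)$ is increasing in $t$. A direct computation gives
\[
\frac{\sigma_k(ct)}{\rho_k(t)} = Ac^{k-1}\left(\frac{1+t^2}{1+c^2 t^2}\right)^k,
\]
and the elementary identity $(1+t^2)/(1+c^2 t^2) = c^{-2} + (1-c^{-2})/(1+c^2 t^2)$ shows this is increasing in $t$ precisely when $c < 1$.

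The main obstacle is then the bootstrap $c < 1$, since the hypothesis $c<1$ is exactly what the monotonicity argument for $\psi/x$ requires, while $c$ itself is defined in terms of $\psi$. I would resolve it as follows: near the origin $\sigma > \rho$ (because $A > 1$), so $\Sigma(x) > \Rho(x)$ and hence $\psi(x) < x$ on some initial interval, giving $c < 1$ there. Applying the Gromov inequality on this initial interval yields monotonicity of $\psi/x$ there, which in turn propagates $c < 1$ to all larger arguments and closes the bootstrap. With $\psi/x$ everywhere decreasing, Theorem~\ref{theorem:density-radial-map} applies; the extremal linear $k$-subspace through the origin for $\sigma$ pulls back, under inverse stereographic projection, to the great $k$-sphere through the center of $C$ intersected with $C$, yielding the proposition.
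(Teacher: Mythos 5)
Your proposal is correct and follows essentially the same route as the paper: stereographic projection to the density $\rho(x)=(1+|x|^2)^{-k}$, the truncated density $\sigma$ with normalization $A>1$, reduction via Theorem~\ref{theorem:density-radial-map} and $\psi=\Sigma^{-1}\circ\Rho$, the Gromov inequality of \cite{cheeger-gromov-taylor1982}, and the same monotonicity computation hinging on $c<1$. Your explicit bootstrap for $c<1$ is just a more carefully spelled-out version of the paper's closing remark (indeed, since $\Sigma>\Rho$ everywhere, $\psi(x)<x$ holds outright), so nothing is missing.
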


Hyperbolic geometry in the Poincar\'e model has the conformal factor $\frac{1}{1-x^2}$. Thus repeating the argument for the spherical case in the right hand part of Equation~\eqref{eq: final spherical density equation} we obtain 
\[
Ac^{k-1}\left(\frac{1+t^2}{1-(ct)^2}\right)^k
\] 
(we are moving $\mathbb R^n$ to the unit ball and work in the range $ct < 1$), which obviously increases in $x$. This proves the hyperbolic case for regular maps. Let us formulate this:

\begin{proposition}
\label{proposition:hyp-waist}
The waist of sufficiently regular maps $B \to Y^{n-k}$ for a hyperbolic ball $B\subset \mathbb H^n$ and the Riemannian $k$-volume in $\mathbb H^n$ is attained at intersections $B\cap \mathbb H^k$ passing through the center of $B$.
\end{proposition}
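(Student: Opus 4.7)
The plan is to mimic the proof of Proposition~\ref{proposition:cap-waist} in the Poincar\'e ball model of $\mathbb H^n$. A hyperbolic ball $B$ of radius $R$ centered at the origin is represented in this model by a Euclidean ball $B_E(R')$ of some radius $R'<1$, and the hyperbolic $k$-volume of a submanifold $X\subset B$ equals its Euclidean $k$-volume weighted by $(1-|\mathbf x|^2)^{-k}$ up to an overall conformal constant. I would therefore set
\[
\sigma(\mathbf x)=\frac{A}{(1-|\mathbf x|^2)^k}\text{ for }|\mathbf x|<R',\qquad \sigma(\mathbf x)=0\text{ for }|\mathbf x|\ge R',
\]
with $A>0$ chosen so that $\vol_{k,\sigma}(\mathbb R^k)=\vol_{k,\rho}(\mathbb R^k)=\vol_k\mathbb S^k$, where $\rho(\mathbf x)=1/(1+|\mathbf x|^2)^k$ is the stereographic density whose $k$-waist was established above. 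Since scaling a density by a positive constant rescales its $k$-waist by the same constant, determining the waist of $\sigma$ recovers the waist of the unscaled hyperbolic density, and the extremal fiber $\mathbb R^k\cap B_E(R')$ corresponds under the Poincar\'e identification precisely to $B\cap\mathbb H^k$.

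To apply Theorem~\ref{theorem:density-radial-map}, the only thing left to check is the monotonicity condition; by \eqref{eq:reversedefinition of mu_k} and the Gromov inequality invoked in the spherical case, it suffices to show that
\[
\frac{\sigma_k(ct)}{\rho_k(t)}=Ac^{k-1}\left(\frac{1+t^2}{1-(ct)^2}\right)^{k}
\]
is increasing in $t$ for each admissible constant $c$. The algebraic rewriting
\[
\frac{1+t^2}{1-c^2t^2}=-c^{-2}+\frac{1+c^{-2}}{1-c^2t^2}
\]
makes this transparent: on the domain $ct<1$ the denominator $1-c^2t^2$ decreases while the coefficient $1+c^{-2}$ is positive. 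Notably, no hypothesis on $c$ is required here, in contrast to the spherical case, where the analogous computation ran into an honest dichotomy depending on whether $c<1$.

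The main obstacle is essentially just bookkeeping: verifying that the normalization constant $A$ and the implicit radial function $\psi=\Sigma^{-1}\circ\Rho$ are well-defined on the relevant domains, that the radial map $F$ extends continuously through the origin, and that the extremal Euclidean fiber translates back to the totally geodesic section $B\cap\mathbb H^k$. All of this parallels exactly the spherical cap argument, so once the monotonicity is in hand Theorem~\ref{theorem:density-radial-map} delivers the proposition directly.
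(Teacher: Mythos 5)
Your proposal is correct and follows essentially the same route as the paper: transfer the problem to the Poincar\'e ball model with the conformal density $A(1-|\mathbf x|^2)^{-k}$ truncated at $R'$, and feed it into Theorem~\ref{theorem:density-radial-map} starting from $\rho(\mathbf x)=(1+|\mathbf x|^2)^{-k}$, with the monotonicity of $Ac^{k-1}\bigl(\tfrac{1+t^2}{1-(ct)^2}\bigr)^k$ checked via Gromov's inequality. Your explicit rewriting $\tfrac{1+t^2}{1-c^2t^2}=-c^{-2}+\tfrac{1+c^{-2}}{1-c^2t^2}$ and the observation that no condition on $c$ is needed (unlike the spherical case) match exactly what the paper does, only spelled out slightly more fully.
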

%
% This theorem follows from the fact that $\frac{1+x^2}{1-c^2x^2}$ is obviously increasing.

\subsection{Riemannian $k$-volume version of Theorem \ref{theorem:CAT-waist} and Corollary \ref{corollary:cat1waist}}
\label{sec:cat-riemannian proofs}

Corollary~\ref{corollary:cat1waist} is obtained from Theorem~\ref{theorem:CAT-waist} by going to the limit $R\to\pi$, so we assume we only consider Theorem~\ref{theorem:CAT-waist} and work with metric balls. We use the term \emph{anti-$1$-Lipschitz} for maps between metric spaces that do not decrease the distance between pairs of points.

\begin{lemma}
\label{lemma:cat-anti-lipschitz}
If $M$ is an $n$-dimensional $\CAT(\kappa)$ Riemannian manifold and $\mathbb M_\kappa^n$ is the model space of constant curvature $\kappa$, then any metric ball $B_p(R)\subset M$ (assume $R<\pi/\sqrt{\kappa}$ for $\kappa>0$) is an anti-$1$-Lipschitz image of the ball $B'_q(R)\subset \mathbb M_\kappa^n$.
\end{lemma}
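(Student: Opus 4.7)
The plan is to construct the anti-$1$-Lipschitz map $\Phi\colon B'_q(R) \to B_p(R)$ by a double exponentiation, and to deduce the distance inequality from the $\CAT(\kappa)$ triangle comparison applied at the common center. Pick any linear isometry $I\colon T_q\mathbb M_\kappa^n \to T_pM$ between the two tangent spaces and set
\[
\Phi \;=\; \exp_p \circ I \circ \exp_q^{-1}.
\]
The model-space exponential $\exp_q$ is a diffeomorphism from the open ball of radius $R$ in $T_q\mathbb M_\kappa^n$ onto $B'_q(R)$ (standard in constant curvature, assuming $R<\pi/\sqrt\kappa$ when $\kappa>0$), so $\Phi$ is well defined. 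Completeness of $M$ makes $\exp_p$ defined on all of $T_pM$; because $\exp_p$ is radially distance-preserving,
\[
d\bigl(p,\Phi(x)\bigr) \;\le\; |I\exp_q^{-1}(x)| \;=\; d_{\mathbb M_\kappa^n}(q,x) \;<\; R,
\]
so $\Phi$ lands in $B_p(R)$. Surjectivity of $\Phi$ onto $B_p(R)$ follows from the completeness of $M$: any $y\in B_p(R)$ is joined to $p$ by a minimizing geodesic of length $d(p,y)<R$, and pulling its initial velocity back to $T_q\mathbb M_\kappa^n$ via $I^{-1}$ produces a preimage of $y$.

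To prove the anti-$1$-Lipschitz property, fix $A', B' \in B'_q(R)$ and set $A = \Phi(A')$, $B = \Phi(B')$. By construction the radial segments $[p,A]$ and $[p,B]$ in $M$ are the $\exp_p$-images of segments with initial vectors $I\exp_q^{-1}(A')$ and $I\exp_q^{-1}(B')$, so
\[
d(p,A)=d(q,A'),\qquad d(p,B)=d(q,B'),
\]
and the angle at $p$ between $[p,A]$ and $[p,B]$ equals the angle in $T_pM$ between those initial vectors which, because $I$ is a linear isometry, coincides with the angle at $q$ between $[q,A']$ and $[q,B']$. The triangles $pAB\subset M$ and $qA'B'\subset \mathbb M_\kappa^n$ therefore share two side lengths and the included angle, and the triangle comparison property of $\CAT(\kappa)$-spaces recalled in the introduction delivers $d(A,B)\ge d(A',B')$, which is exactly the desired inequality.

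The main obstacle is a bookkeeping issue when $\kappa>0$ and $R$ is close to $\pi/\sqrt\kappa$: the triangle comparison as stated needs the triangle $pAB$ in $M$ to have perimeter less than $2\pi/\sqrt\kappa$, while a priori we only control each of $d(p,A)$, $d(p,B)$ by $R$ and $d(A,B)$ only by $2R$. One expects to handle this either by appealing to the hinge form of the comparison (valid in $\CAT(\kappa)$ once the two sides meeting at $p$ are each shorter than $\pi/\sqrt\kappa$, which is our situation) or, if the perimeter happens to be too large, by subdividing the radial segments $[p,A]$ and $[p,B]$ into short pieces and iterating the comparison on the resulting shorter hinges. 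A secondary but standard technical point is the uniqueness and continuous dependence of the minimizing geodesic from $p$ to any point of $B_p(R)$ — needed to make $\Phi$ and the angle at $p$ unambiguous — which is classical for $\CAT(\kappa)$-geometry in the range $R<\pi/\sqrt\kappa$.
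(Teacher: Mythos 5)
Your map $\exp_p\circ I\circ\exp_q^{-1}$, with the anti-Lipschitz property deduced from the side--angle--side comparison at the common center, is exactly the paper's (three-line) proof, just written out in more detail. The perimeter worry you flag for $\kappa>0$, $R>\pi/(2\sqrt\kappa)$ is legitimate but resolves the way you expect: if $\dist(A,B)$ were less than the model value $\tilde d$, then the perimeter $\dist(p,A)+\dist(p,B)+\dist(A,B)<\dist(q,A')+\dist(q,B')+\tilde d\le 2\pi/\sqrt\kappa$ (since triangles in $\mathbb M^2_\kappa$ have perimeter at most $2\pi/\sqrt\kappa$), so the comparison as stated in the paper applies and yields a contradiction, which justifies the hinge form you invoke.
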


\begin{proof}
Since $M$ is $\CAT(\kappa)$, the exponential map $\exp_p : T_pM\to M$ can be considered a diffeomorphism between the balls of radius $R$ in the tangent space and in the manifold itself. Moreover, if we consider the similar exponential map on the model space $\exp_q : T_q \mathbb M^n_\kappa \to \mathbb M^n_\kappa$ then we obtain that the map
\[
g = \exp_p \circ\exp_q^{-1}
\]
is defined for all points of $B'_q(R)$ and does not decrease the distance from the triangle comparison property of $\CAT(\kappa)$.
\end{proof}

So we have an anti-$1$-Lipschitz map $h$ from $B'_q(R)\subset \mathbb M^n_\kappa$ to $B_p(R)\subset M$. In order to study the Riemannian $k$-volume of the fibers of $f$ we may observe that under the regularity assumptions on $f$, a fiber $X$ of $f\circ h$ has $k$-volume at least $\vol_k B^k(R)$, where $B^k(R)$ is the $k$-dimensional ball in the model space. After that we just observe that $h(X)$ is the corresponding fiber of $f$ and its Riemannian $k$-volume cannot decrease under an anti-$1$-Lipschitz map $h$.

\section{Proofs of the waist theorems for the lower Minkowski content}
\label{sec:mink-waist}

\subsection{Waist of the sphere for discontinuous maps}
\label{sec:gromov-discontinius}

Compared to the results of the previous section, Gromov and Memarian proved the waist of the sphere theorem in a more general setting, for arbitrary continuous maps and the sharp estimates for the volumes of all $t$-neighborhoods of the fiber $f^{-1}(y)$. We start with a generalization of Gromov's waist of the sphere theorem to the case when $f$ is not defined on the whole sphere, because in the previous section we already needed a particular case of this theorem with one point of discontinuity.

\begin{theorem}
\label{theorem:disc-waist}
Let $k>1$ and assume $V_1,\ldots, V_N$ are linear subspaces of $\mathbb R^{n+1}$ of dimension at most $k-1$ each. Given a continuous map 
\[
f : \mathbb S^n\setminus \bigcup_i V_i \to Y,
\]
with $\mathbb S^{n}$ the unit round sphere, $Y$ an $(n-k)$-dimensional manifold, it is possible to find $y\in Y$ such that for every $t>0$
\[
\vol \nu_t(f^{-1}(y), \mathbb S^n) \ge \vol \nu_t(\mathbb S^k, \mathbb S^n)
\]
and in particular
\[
\lmink_k f^{-1}(y) \ge \vol_k \mathbb S^k.
\]
\end{theorem}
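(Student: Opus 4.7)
My plan is to reduce to the classical Gromov--Memarian waist theorem by approximating $f$ with genuinely continuous maps on all of $\mathbb{S}^n$ and then passing to the limit. Write $S = \bigcup_i (V_i \cap \mathbb{S}^n)$; it is a finite union of round subspheres of dimension at most $k-2$, hence of codimension at least $n-k+2 \geq 2$ in $\mathbb{S}^n$. The Riemannian tube formula gives $\vol \nu_t(S,\mathbb{S}^n) = O(t^{n-k+2})$, negligible against $\vol \nu_t(\mathbb{S}^k,\mathbb{S}^n) = \Theta(t^{n-k})$. Write $N_\epsilon$ for the open $\epsilon$-tubular neighborhood of $S$.

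For each small $\epsilon>0$ I would construct a continuous $f_\epsilon : \mathbb{S}^n \to Y$ with $f_\epsilon = f$ on $\mathbb{S}^n \setminus N_{2\epsilon}$ and $f_\epsilon \equiv y_0$ on $N_\epsilon$, for a fixed basepoint $y_0 \in Y$, by interpolating $f|_{\partial N_{2\epsilon}}$ through a null-homotopy to $y_0$ along the collar $N_{2\epsilon}\setminus N_\epsilon$. This is the main technical step and the place I expect trouble: the required null-homotopy may face genuine obstructions in $\pi_*(Y)$. I would try to kill them by using the ANR property of $Y$, the triviality of the normal bundle of each great subsphere in $\mathbb{S}^n$ (so $N_{2\epsilon}$ is a product), and a local-contractibility/subdivision argument that becomes available when $\epsilon$ is small enough. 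As a fallback I would directly modify the pancake or needle decomposition used inside the Gromov--Memarian proof so that it bypasses $S$ altogether and no continuous extension of $f$ is needed.

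Granted $f_\epsilon$, the classical theorem produces $y_\epsilon \in Y$ with $\vol \nu_t(f_\epsilon^{-1}(y_\epsilon),\mathbb{S}^n) \geq \vol \nu_t(\mathbb{S}^k,\mathbb{S}^n)$ for every $t>0$. When $y_\epsilon\neq y_0$ (the case $y_\epsilon = y_0$ is handled similarly), $f_\epsilon^{-1}(y_\epsilon) \subseteq f^{-1}(y_\epsilon) \cup N_{2\epsilon}$, so
\[
\vol \nu_t(f^{-1}(y_\epsilon),\mathbb{S}^n) \geq \vol \nu_t(\mathbb{S}^k,\mathbb{S}^n) - \vol \nu_{t+2\epsilon}(S,\mathbb{S}^n).
\]
Compactness extracts a subsequence $y_{\epsilon_j}\to y \in Y$, and since $f$ is continuous off $S$ one has $\limsup f^{-1}(y_{\epsilon_j}) \subseteq f^{-1}(y)\cup S$. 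The reverse Fatou lemma then gives
\[
\vol \nu_t(f^{-1}(y),\mathbb{S}^n) \geq \vol \nu_t(\mathbb{S}^k,\mathbb{S}^n) - 2\vol \nu_t(S,\mathbb{S}^n).
\]
Dividing by $v_{n-k}t^{n-k}$ and letting $t\to 0^+$ the slack vanishes and produces $\lmink_k f^{-1}(y)\geq \vol_k\mathbb{S}^k$. Upgrading this to the sharp pointwise inequality $\vol \nu_t(f^{-1}(y),\mathbb{S}^n)\geq \vol \nu_t(\mathbb{S}^k,\mathbb{S}^n)$ for every $t>0$ demands eliminating both $\vol \nu_t(S)$ terms, which I expect will require either refining the approximation so that its defect is uniformly subleading in $t$, or that same direct modification of the Gromov--Memarian argument; this sharpening, together with the existence of $f_\epsilon$, is where I foresee the real work.
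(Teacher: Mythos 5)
Your main technical step --- constructing a continuous $f_\epsilon:\mathbb S^n\to Y$ that agrees with $f$ off a small tube around $S=\bigcup_i(V_i\cap\mathbb S^n)$ --- is genuinely obstructed, not just delicate. If $V_1$ has dimension $k-1$, then $\mathbb S^n\setminus(V_1\cap\mathbb S^n)$ deformation retracts onto the linking sphere $\mathbb S^{n-k+1}=V_1^\perp\cap\mathbb S^n$, so one may take $f=g\circ r$ with $r$ the retraction and $g:\mathbb S^{n-k+1}\to Y$ any map; choosing $Y=\mathbb S^{n-k}$ and $g$ the suspended Hopf map (nontrivial in $\pi_{n-k+1}(\mathbb S^{n-k})\cong\mathbb Z/2$ for $n-k\ge 2$), any continuous map on all of $\mathbb S^n$ agreeing with $f$ outside a tube would extend $g$ over a slice $\{x\}\times D^{n-k+2}$ of the tube, forcing $g$ to be null-homotopic --- contradiction. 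Local contractibility, the ANR property of $Y$, or triviality of the normal bundle cannot kill a global homotopy obstruction of this kind (the paper's own remark after its $k=1$ theorem makes the same point for two removed points). Note also that even where $f_\epsilon$ exists, your limiting argument only yields the Minkowski-content conclusion: for a fixed $t>0$ the defect $\vol\nu_t(S)$ does not vanish, so the sharp inequality $\vol\nu_t(f^{-1}(y),\mathbb S^n)\ge\vol\nu_t(\mathbb S^k,\mathbb S^n)$ for \emph{every} $t$, which is part of the statement, is not obtained.

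What you relegate to a one-line fallback is in fact the paper's entire proof, and it requires no extension of $f$ at all. In the Gromov--Memarian argument the sphere is cut by hyperplanes whose normals are chosen perpendicular to prescribed $(k-1)$-dimensional subspaces $L_i$, with the only requirement that the $L_i$ be dense in the Grassmannian $G_{k-1,n+1}$. Since each $V_i$ has dimension at most $k-1$, one can simply put the $V_i$ at the beginning of the list of the $L_i$; then every cutting hyperplane contains $\bigcup_i V_i$, the open pieces of the resulting binary partition lie entirely in the domain of continuity of $f$, and the Borsuk--Ulam step (applied to the $f$-images of the pancake centers, which lie in those open pieces) goes through verbatim, giving the estimate for every $t>0$ with no error terms and no limit in $\epsilon$. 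To make your proposal into a proof you would have to carry out exactly this modification of the partition; the approximation-by-continuous-maps route cannot work as stated.
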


\begin{proof}
In fact the proof in~\cite{grom2003,mem2009} works fine for this statement and we assume the familiarity of the reader with the structure of that proof, citing it mostly by the paper \cite{mem2009}.

That proof consisted in successively cutting the sphere $\mathbb S^n$ with hyperplanes to make a binary partition of the sphere. The normals of the hyperplanes of $i$th stage of the binary tree were chosen perpendicular to a certain prescribed subspace $L_i$ of dimension $k-1$, see \cite[Pages 10--13]{mem2009}. The subspaces $L_i$ were not quite arbitrary, but the assumption needed for the proof was that when $i\to \infty$ the subspaces $L_i$ must be dense in the Grassmannian $G_{k-1, n+1}$. This density assumption guarantees that the parts $P_i$ of the partition, when the number of parts tends to infinity, will be close to $(n-k)$-dimensional subspheres and are called \emph{pancakes} because of this.

Note that this assumption will be still satisfied if we choose the beginning of the list of subspaces $L_i$ in the argument containing our initially given list $V_1$, $\ldots$, $V_N$. Then the hyperplane cuts will contain all the $V_i$, and the open parts of the resulting partition will be contained in $\mathbb S^n\setminus \bigcup_i V_i$, where the map $f$ is continuously defined and where its values are used when applying the Borsuk--Ulam type argument of \cite[Proof of Theorem 4]{mem2009} to ensure that the ``centers'' of the pancakes go to the same point under $f$. The rest of the argument in the proof in~\cite{mem2009} works without change.

We also have a generalization to arbitrary manifold $Y$ similar to~\cite{karvol2013}. The cohomology triviality assumption from \cite[Lemma 3.7]{karvol2013} is satisfied this time because the configuration space \cite[the last paragraph]{karvol2013} is built from spheres of dimension $n-k+1$, perpendicular to the given $L_i$, that are mapped to $Y$ inducing zero map on the reduced cohomology. And the $f$-images of the centers of pancakes in \cite[Lemma 5.1]{karvol2013} need only be calculated on the domain of continuity of $f$.
\end{proof}

The previous theorem shows that the waist of the sphere theorem works in full strength for maps with, informally speaking, discontinuities of codimension $2$ compared to the fibers. In the case $k=1$ for the sphere with one point removed we only prove the following weaker result:

\begin{theorem}
\label{theorem:disc-waist-k1}
Let $x\in\mathbb S^n$ be a point in the unit round sphere. Given a continuous map
\[
f : \mathbb S^n\setminus \{x\} \to Y,
\]
where $Y$ is a smooth $(n-1)$-dimensional manifold, it is possible to find $y\in Y$ such that
\[
\lmink_1 f^{-1}(y) \ge \vol_1 \mathbb S^1 = 2\pi.
\]
\end{theorem}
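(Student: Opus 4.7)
The plan is to adapt the Gromov--Memarian bisection argument of Theorem~\ref{theorem:disc-waist} to the case $k=1$ with a single removed point $x$. The obstruction for $k=1$ is that the subspaces $L_i$ in Memarian's construction are forced to be the zero subspace, so the device used in Theorem~\ref{theorem:disc-waist} of placing the removed subspaces $V_j$ at the start of the list of $L_i$ is unavailable. The case $n=1$ aside (where $f:\mathbb S^1\setminus\{x\}\to Y^0$ is locally constant into a $0$-manifold and the conclusion is immediate), for $n\ge 2$ I would instead impose the constraint at the level of the cutting hyperplanes themselves: require every bisecting hyperplane in the binary tree of cuts to contain the line $\mathbb R x\subset \mathbb R^{n+1}$.

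Hyperplanes through both the origin and $x$ form an $(n-1)$-dimensional Grassmannian, which for $n\ge 2$ leaves enough parameters to carry out the Borsuk--Ulam type step at each stage and find a hyperplane bisecting any prescribed pancake in the required measure. With this restriction the antipodal pair $\{\pm x\}$ lies on the boundary of every pancake at every stage, so the pancake interiors are contained in $\mathbb S^n\setminus\{x\}$, the domain of continuity of $f$. The remaining steps of Memarian's argument, together with the extension to arbitrary $(n-1)$-dimensional target $Y$ via the cohomological setup of \cite{karvol2013} as in the proof of Theorem~\ref{theorem:disc-waist}, then go through unchanged and produce in the limit a point $y\in Y$.

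The main obstacle, and the reason the conclusion here is weaker than in Theorem~\ref{theorem:disc-waist}, is extracting the Minkowski content bound from this limiting procedure. Forcing every cut through $\{\pm x\}$ destroys the uniform ``thin slab around a great $(n-1)$-sphere'' shape of the pancakes in any fixed neighbourhood of $x$: these pancakes degenerate to narrow spherical sectors with aspect ratio $\to 0$, so the sharp pointwise comparison $\vol \nu_t(f^{-1}(y),\mathbb S^n)\ge \vol\nu_t(\mathbb S^1,\mathbb S^n)$ for every $t>0$ of \cite{mem2009} is no longer available. I would argue instead asymptotically: fix a small spherical ball $B_\delta(x)$, outside of which the pancake geometry is uniform, and estimate
\[
\vol \nu_t(f^{-1}(y),\mathbb S^n) \ge \vol\nu_t\bigl(f^{-1}(y)\setminus B_\delta(x),\,\mathbb S^n\bigr),
\]
bounding the right-hand side below via the standard $t$-neighbourhood comparison on the ``good'' pancakes and sending $t\to 0^+$ then $\delta\to 0^+$ to recover $\lmink_1 f^{-1}(y)\ge 2\pi$. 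The technical heart of the argument is showing that the contribution of the degenerate near-$x$ region is of strictly lower order in $t$ than $t^{n-1}$, so that it disappears in the lim inf defining $\lmink_1$; this is exactly what prevents one from obtaining the pointwise neighbourhood inequality as in Theorem~\ref{theorem:disc-waist}.
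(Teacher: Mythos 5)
Your plan diverges from the paper's proof, and as it stands it has a genuine gap at the Borsuk--Ulam step. Restricting every cut to a hyperplane containing $\mathbb R x$ leaves, per node of the binary tree, only the $(n-1)$-dimensional family of normals in $x^\perp$ and \emph{no translational freedom}: in the unconstrained scheme the translate of an affine hyperplane is the extra parameter that is spent on equal-volume bisection, so that each node still contributes a full sphere of directions of dimension $n-k+1=n$, one more than $\dim Y=n-1$, and this surplus of one dimension is exactly what the equivariant argument of Memarian (and the configuration space of \cite{karvol2013}, built from spheres of dimension $n-k+1$) needs. In your scheme the bisection condition must be imposed on the rotational parameters themselves, leaving generically an $(n-2)$-parameter family of admissible cuts per node against the $(n-1)$-dimensional condition of matching the $f$-images of the two half-centers; the count is short by one and the topological step does not go through. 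A second problem is geometric: for $n\ge 3$ a piece of a partition all of whose cuts contain $\mathbb R x$ is a join $\{\pm x\}\ast Q$ with $Q$ a convex subset of the equatorial $\mathbb S^{n-1}$ that shrinks as the partition refines, so the pieces degenerate to needle-like neighbourhoods of half great circles rather than pancakes close to $(n-1)$-dimensional subspheres. The machinery you then invoke --- projection of the piece to $T_i$, $\sin$-concavity of the projected density, the sharp estimate \eqref{equation:pancake-estimate} around the maximum-density centre, and the continuous selection of centres --- is developed precisely for the $(n-1)$-dimensional pancake limit and is not available for these pieces, so the ``standard comparison on the good pancakes'' in your final asymptotic step has nothing to stand on.

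For contrast, the paper's proof keeps the unconstrained Gromov--Memarian partition and instead repairs the map: fix $t$, choose $\delta\ll t$, and extend $f$ continuously into $\nu_\delta(x)$ (possible because $f$ restricted to $\partial\nu_\delta(x)$ already extends over the complementary ball $\mathbb S^n\setminus\nu_\delta(x)$, hence is null-homotopic), then run the usual pancake argument for the modified map. The only loss comes from ``bad'' pancakes whose centres fall inside $\nu_\delta(x)$: if their total volume is $V$, the $t$-neighbourhood of their centres has volume at least of order $V t^{n-1}$ but is contained in $\nu_{t+\delta}(x)$, of volume $O(t^n)$, whence $V=O(t)$ and discarding them costs only a factor $1-O(t)$ --- harmless in the $t\to+0$ limit defining $\lmink_1$. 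Finally, since every pancake lies in the hemisphere around its centre, some centres lie in the southern hemisphere, so the points $y_\delta$ stay in the compact $f$-image of a region untouched by the modification and one can pass to a limit $y$. If you want to salvage your route, you would have to both redesign the configuration space (to recover the missing parameter) and prove a new per-piece estimate for the degenerate joins; the paper's modification-plus-bad-pancake-bookkeeping avoids both issues.
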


\begin{proof}
This time we need to examine the proof in~\cite{grom2003,mem2009} in more detail. In this case ($k=1$ in our notation) we split the sphere by a binary partition into equal volume \emph{pancakes} $P_1,\ldots, P_N$, where $N$ is a big power of two, and apply the Borsuk--Ulam type argument of \cite[Proof of Theorem 4]{mem2009} to make the images of their ``centers'' equal. The parts are called \emph{pancakes} because they are $\delta$-close ($\delta\to +0$ as $N\to\infty$) to their respective $(n-1)$-dimensional geodesic subspheres $T_i$. The ``center'' of a pancake $P_i$ is chosen to be close to the point of maximum density of the projection of the volume of $P_i$ to $T_i$, see \cite[Section 5.1]{mem2009}.

Now choose $t>0$ that will be the size of the neighborhood of $f^{-1}(y)$, in this theorem we will only be interested in the limit behavior when $t\to +0$. Choose a much smaller $\delta>0$ (we will write $\delta \ll t$) and modify $f$ in the $\delta$-neighborhood of $x$, $\nu_\delta (x)$ so that the modified map becomes continuous. This is possible since $f$ can be continuously extended from the sphere $\partial\nu_\delta(x)$ to the ball $\mathbb S^n\setminus \nu_\delta(x)$ and therefore it is also possible to extend it continuously to the open ball $\nu_\delta(x)$. 

Apply the Borsuk--Ulam-type pancake argument from \cite[Proof of Theorem 4]{mem2009} to the modified map. It produces some $y_\delta\in Y$, equal to the $f$ image of the centers of the pancakes. The proof of the waist theorem uses the estimate of the volume of $t$-neighborhoods of $f^{-1}(y_\delta)$ for $t\gg \delta$, see \cite[Section 5]{mem2009}. This total estimate is assembled from the estimates for the $t$-neighborhoods of the center $c(P_i)$ in every pancake $P_i$, $\nu_t(c(P_i))\cap P_i$, looking like
\begin{equation}
\label{equation:pancake-estimate}
\frac{\vol \nu_t(c(P_i))\cap P_i}{\vol P_i} \ge \frac{\vol \nu_t(\mathbb S^k)}{\vol \mathbb S^n} + o(1)
\end{equation}
up to some error $o(1)$ tending to zero as $\delta\to+0$, with $k=1$ in our particular case. 

Now we want to check how our adjustment of the map $f$ affects this estimate on the size of the $t$-neighborhood of the set of centers of pancakes. We are satisfied when the center $c(P_i)$ is outside $\nu_\delta(x)$, in this case it corresponds to the domain where $f$ is originally defined before the adjustment. Thus we only have to bound from above the total volume of those $P_i$ whose centers turn out to be inside $\nu_\delta(x)$ therefore bounding the loss in the estimate when we exclude those ``bad'' pancakes.

If the total volume of those ``bad'' pancakes equals $V$ then by the $t$-neighborhood estimate for the centers of the pancakes we obtain that the $t$-neighborhood of the set of their centers, for $t\gg\delta$, has volume at least 
\[
V\frac{\vol \nu_t \mathbb S^1}{\vol \mathbb S^n} = \frac{V}{\vol \mathbb S^n} 2\pi v_{n-1} t^{n-1} (1 + o(1)),
\]
here we use the fact that the Minkowski content of $\mathbb S^1$ inside $\mathbb S^n$ actually equals its length, $o(1)$ means something tending to zero as $t\to+0$. At the same time the $t$-neighborhood of the centers of ``bad'' pancakes is contained in the $(t+\delta)$ neighborhood of the north pole, comparing the volumes we have
\[
\frac{V}{\vol \mathbb S^n} 2\pi v_{n-1} t^{n-1} (1 + o(1)) \le \vol \nu_{t+\delta}(x) = v_n (t+\delta)^n (1 + o(1)).
\]
For small $t$ and $\delta \ll t$, we see that $V = O(t)$ as $t\to +0$. This means that the estimate of the $t$-neighborhood after dropping ``bad'' pancakes is at least $1-O(t)$ times the original estimate, which is good for us, since in this theorem we are only interested in the limit case $t\to +0$.

After that we pass to the limit $N\to\infty$ (the number of pancakes), $\delta\to+0$ as it is done in \cite[Section 5]{mem2009}. In that process we will have to choose a single limit value $y_\delta\to y$ from a certain compactness argument. The values of $y_\delta$ are the images of the centers of the pancakes on the steps with positive $\delta$ tending to zero. Since every pancake is contained in the hemisphere around its center (see \cite[Section 4]{mem2009}), some of those centers must be from the southern hemisphere. Therefore the $y_\delta$ we choose are in the compact $f$-image of the southern hemisphere, not touched by our adjustment, and we can indeed choose an accumulation point of them in the going to the limit argument. 
\end{proof} 

\begin{remark}
In Theorem~\ref{theorem:disc-waist-k1} it is impossible to have two points of discontinuity on the sphere. Just because after dropping the north and the south poles it is possible to project the sphere onto its equator with all fibers of such a projection no longer than $\pi$. 

The proof above does not work in this situation because the identity map $\mathbb S^{n-1}\to \mathbb S^{n-1}$ cannot be extended to the northern hemisphere with boundary $\mathbb S^{n-1}$. But this observation shows that in Theorem \ref{theorem:disc-waist-k1} it is possible to drop arbitrarily many points from the sphere assuming the absence of such topological obstructions, in particular, assuming vanishing of the homotopy group $\pi_{n-1}(Y) = 0$.
\end{remark}

\subsection{Argument for the lower Minkowski content in Theorem~\ref{theorem:model-ball-waist}}
\label{ssec:minkowski-waist}

Appropriate versions of Propositions~\ref{proposition:cap-waist} and \ref{proposition:hyp-waist} would follow if we had an estimate similar to~\eqref{eq:k-measure-transform} generalized to the lower Minkowski content from the Riemannian $k$-volume, as well as a similar estimate for the conformal projections that we use in the proof. This all could be done if we understood the behavior of the (weighted) Minkowski content under smooth transformations, similar to the estimate made in~\cite[Theorem 5]{ak2016ball} for linear transformations. We could not find appropriate references and are aware of the lack of additivity of the Minkowski content that complicates matters. Therefore we follow another way and invoke the Gromov--Memarian proof directly rather then their result.

Note that we have made several maps that transformed the original $\mathbb S^n$ to 
what we previously called $C$ or $B$, the metric ball in the model space. Call it $B$ this time.

These maps behave smoothly except for one point (call it \emph{the north pole}), where it is not defined; but here we take Theorems~\ref{theorem:disc-waist} and \ref{theorem:disc-waist-k1} into account. We now actually work with the sphere without its north pole, carrying some Riemannian metric $g$ pulled back from $B$ under the maps constructed in Section \ref{section:model-riemannian-balls}.

We again look at the original proof in~\cite{grom2003,mem2009} and check what changes are needed there when we pass to a new Riemannian metric $g$. Let the standard metric of the sphere be $g_0$ and we compare it to the pull-back metric $g$. Since the map we consider is radially symmetric, at any point $x\in \mathbb S^n$ at distance $d$ from the south pole $s\in \mathbb S^n$ the metric $g$ compares to the metric $g_0$ as increased $\lambda^\parallel(d)$ times along the tangent to the geodesic $[s,r]$ and increased $\lambda^\perp(d)$ times in the perpendicular direction. 

Consider a standard geodesic subsphere $\mathbb S^k\subset \mathbb S^n$ passing through $s$. Remark~\ref{remark:k-ball-to-ball} just means that any $g_0$-geodesic $k$-dimensional ball $B^k(d)\subseteq \mathbb S^k$ centered at $s$ has the same Riemannian $k$-volume with respect to $g$ as it has with respect to $g_0$. From the radial symmetry any such $B^k(d)$ is also $g$-geodesic, in the stereographic image in $\mathbb R^n$ it is just a flat $k$-dimensional ball centered at the origin. In our terminology the equality of $k$-volumes means $\lambda^\parallel(d) \cdot \lambda^\perp(d)^{k-1} = 1$ for all $d>0$.
% , this is obtained by comparing the $k$-volumes of $B^k(d)$ and $B^k(d+\delta d)$ for small $\delta d$.
Moreover, Condition~\eqref{eq:condition on function f} in the current setting means that $\lambda^\perp(d) \ge \lambda^\parallel(d)$, this inequality held in $\mathbb R^n$ and is preserved under the inverse of the conformal stereographic projection.

Now we study the pancakes that were already mentioned in the proofs of Theorems \ref{theorem:disc-waist} and \ref{theorem:disc-waist-k1} for the particular case of $k=1$. The proof in \cite{grom2003,mem2009} decomposes the sphere into pancakes $P_1,\ldots, P_N$ ($N$ is a big power of two) so that every $P_i$ is $\delta$-close (for some $\delta \ll t$) to its respective $(n-k)$-dimensional geodesic subsphere $T_i$. 

For every pancake $P_i$, its volume was projected to $T_i$ and a measure $\mu_i$ in $T_i$ is obtained and shown to have certain concavity properties \cite[Section 4]{mem2009}. The point of maximum density of this measure is considered as a \emph{center} of $P_i$, $c_i\in T_i$. After that a Borsuk--Ulam-type argument \cite[Theorem 4]{mem2009} is used to adjust the partition into the $P_i$ so that the centers go to the same point under $f$, thus considering $\{c_1,\ldots, c_{2^\ell}\}$ as an approximation for the required fiber $f^{-1}(y)$. After that, fixing $t>0$, the estimate \eqref{equation:pancake-estimate} for the volume of the intersection of $B_{c_i}(t)\cap P_i$ is made and the sum of those estimates produces the total estimate for the volume of the $t$-neighborhood of the fiber as $\delta\to+0$ keeping $t$ fixed. 

In fact, in both \cite{grom2003} and \cite[Section 5]{mem2009} \emph{infinite partitions} were considered, where the pancakes $P_c$, after passing to the limit, were replaced by a continuum of Borel measures $\mu_c$ decomposing the volume measure of the sphere, with certain concavity properties ($\sin^{n-k}$-concave, \cite[Section 4]{mem2009}), and having convex support of dimension at most $(n-k)$ each. The argument in fact was a bit more complicated, since an introduction of a parameter $r>0$ was needed, which was used to smoothen the measures $\mu_c$ with radius $r$ and then make a continuous selection of the centers of maximal density of the smoothened measures. After that a going to the limit argument $r\to+0$ was applied, in our case we must go to the limit $t\to +0$ after that.

Now we check what changes in the pancake argument, if we take another metric $g$ instead of~$g_0$, simplifying our task by only estimating the lower Minkowski content and therefore considering the limit case $t\to +0$ (as mentioned above, $t$ goes to zero after $\delta\to +0$). Taking some fixed distance $t$ in the metric $g$ we see that its $t$-balls are now different in different parts of the sphere. Most of them for small $t$ will be approximate ellipsoids, those close to the north pole, where we have a discontinuity, will have size in the perpendicular direction tending to zero, since the metric $g$ tends to infinity there, along the perpendicular directions. Let us make a general observation that the Minkowski content will be estimated up to $(k+1)\alpha$ percent error, if we estimate the metric with $\alpha$ percent error for small $\alpha$, this allows us to describe the metric $g$ locally as a quadratic form at some point compared to another quadratic form $g_0$.

Now we look at a metric ball $B_c(t, g)$ centered at its respective pancake (call it $P_c$, since we are interested in its center and not interested in its number in the sequence) as in the original argument and want to understand how the intersection $B_c(t, g)\cap P_c$ looks like. Such a small ball $B_c(t,g)$ looks like a $g_0$-ball stretched in the south-north direction. Near the north pole $B_c(t,g)$ may be very much stretched, but outside a small neighborhood of the north pole they are only moderately stretched with bounded ratio of axes. The following observations produce the estimate we need: 

1) Outside a certain neighborhood of the north pole, at distance $d$ from the south pole $s$, for sufficiently small $t$, the ball $B_c(t,g)$ contains (up to a small percent error) the ball $B_c(t/\lambda^\perp(d), g_0)$ (here we use that $\lambda^\perp(d) \ge \lambda^\parallel(d)$). Then we may estimate the intersection $B_c(g,t)\cap P_c$ from below by the intersection $B_c(t/\lambda^\perp(d), g_0)\cap P_c$. For the latter we have an estimate from the original proof, but we need to multiply it by $\sqrt{\det g}/\sqrt{\det g_0}$ since we pass to the $g$-Riemannian volume from the $g_0$-Riemannian volume. 

2) We use the estimate \eqref{equation:pancake-estimate} for the intersection of the ball $B_c(t, g_0)$ with the pancake $P_c$, which is asymptotically $Ct^{n-k}$ for $t\to+0$. In our case we shrink the radius of the ball $1/\lambda^\perp(d)$ times, thus possibly getting the factor $\frac{1}{(\lambda^\perp(d))^{n-k}}$ in the original estimate, up to arbitrarily small percent of error if the ball is small (just because the original estimate sums to the volume of the $t$-neighborhood of $\mathbb S^k\subset\mathbb S^n$). After multiplying by $\sqrt{\det g}/\sqrt{\det g_0} = \lambda^\parallel(d) \lambda^\perp(d)^{n-1}$ to account for the change in the Riemannian density, we get the factor $\lambda^\parallel(d) \cdot \lambda^\perp(d)^{k-1} = 1$. This means that we have the estimate asymptotically equivalent to the original Gromov--Memarian estimate for $t\to +0$.

3) Near the north pole this does not work because there the balls $B_c(t,g)$ may become too thin in the perpendicular direction compared to $B_c(t, g_0)$, but this only happens in a very small neighborhood of the north pole that we may choose sufficiently small, only decreasing the result by an arbitrarily small percentage.

4) The resulting estimates obtained by the summation over all the pancakes will be tight for $t\to +0$ just because in the test case when $f^{-1}(y)$ is a geodesic $\mathbb S^k\subset \mathbb S^n$ passing through the south pole everything matches to give the Riemannian $k$-volume of $\mathbb S^k$ with respect to $g$, which we have observed to be the same as the Riemannian $k$-volume of $\mathbb S^k$ with respect to $g_0$.

\subsection{Another result on $t$-neighborhood waist in arbitrary norm}
\label{section:norm}

Let us produce another theorem on the waist of the ball, this time the ball will be the unit ball of a (possibly non-symmetric) norm in $\mathbb R^n$, and the $t$-neighborhood will be understood in the same norm. This is actually a version of \cite[Theorem 5.7]{klartag2016}, where $y$ is allowed to depend on $t$, improving the bound on the measure of the $t$-neighborhood in return:

\begin{theorem}
\label{theorem:waist-norm}
Suppose $K\subset\mathbb R^n$ is a convex body, $\mu$ is a finite log-concave measure supported in $K$, and $f : K\to Y$ is a continuous map to a $(n-k)$-manifold $Y$. Then for any $t \in [0, 1]$ there exists $y\in Y$ such that
\[
\mu (f^{-1}(y) + tK) \ge t^{n-k} \mu (K).
\] 
\end{theorem}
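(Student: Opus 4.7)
The proof follows Klartag's strategy from \cite[Section 5]{klartag2016}: push a reference Gaussian measure forward to $\mu$ by a $1$-Lipschitz map (with target equipped with $\|\cdot\|_K$) and then apply Gromov's Gaussian waist theorem. The improvement over \cite[Theorem 5.7]{klartag2016} comes from applying Gromov's waist for each value of $t$ separately, so that the required $y$ is allowed to depend on $t$ and the bound tightens to $t^{n-k}\mu(K)$.

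First, I would invoke the transportation construction of \cite[Section 5]{klartag2016}: there exists a measurable map $T \colon \mathbb R^n \to \mathbb R^n$, pushing a suitably scaled Gaussian measure $\gamma$ forward onto $\mu$, with the Lipschitz property
\[
\|T(x) - T(x')\|_K \le \|x - x'\|_2 \qquad \text{for all } x, x' \in \mathbb R^n,
\]
where $\|\cdot\|_K$ denotes the Minkowski functional of $K$. The variance of $\gamma$ is chosen so as to be compatible with $\mu(K)$ in the way dictated by Klartag's construction.

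Next, apply Gromov's waist theorem for the Gaussian measure \cite{grom2003} to the continuous composition $f \circ T \colon \mathbb R^n \to Y$ at the scale $t$, obtaining $y = y(t) \in Y$ with
\[
\gamma\bigl((f \circ T)^{-1}(y) + t B\bigr) \ge \gamma(L^k + t B),
\]
where $B$ is the Euclidean unit ball in $\mathbb R^n$ and $L^k \subset \mathbb R^n$ is a fixed $k$-dimensional linear subspace. By Fubini applied to the product structure of $\gamma$, the right-hand side factorizes as the $(n-k)$-dimensional Gaussian mass of a ball of radius $t$ in $\mathbb R^{n-k}$.

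Then transfer by $T$: since $T$ is $1$-Lipschitz from $\|\cdot\|_2$ to $\|\cdot\|_K$, one has
\[
T\bigl((f \circ T)^{-1}(y) + t B\bigr) \subseteq f^{-1}(y) + t K,
\]
hence
\[
\mu(f^{-1}(y) + t K) \ge \gamma\bigl((f \circ T)^{-1}(y) + t B\bigr) \ge t^{n-k} \mu(K),
\]
the last inequality being a one-variable Gaussian estimate that is built into the choice of the variance of $\gamma$.

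The main technical obstacle is the existence, with the correct normalization, of the Lipschitz transport $T$ in the first step; this is Klartag's principal contribution and relies on log-concavity of $\mu$ together with the fact that its support is contained in the convex body $K$. Once $T$ is in hand, the remaining steps are routine, and the key new observation is simply that Gromov's Gaussian waist may be applied with $y$ depending on the scale $t$, which eliminates the loss that appears in \cite[Theorem 5.7]{klartag2016} when a single $y$ is required to serve all $t$ simultaneously.
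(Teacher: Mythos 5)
Your approach cannot yield the stated inequality, and the gap is in the final step. After applying Gromov's Gaussian waist theorem to $f\circ T$ and transferring by $T$, the best you can conclude is
\[
\mu\bigl(f^{-1}(y)+tK\bigr)\;\ge\;\gamma\bigl(\nu_t(\mathbb R^k,\mathbb R^n)\bigr),
\]
the $\gamma$-measure of the $t$-neighborhood of a $k$-dimensional subspace. Since $T$ pushes $\gamma$ forward onto $\mu$, necessarily $\gamma(\mathbb R^n)=\mu(K)$, and the $t$-neighborhood of $\mathbb R^k$ has $\gamma$-measure strictly smaller than $\gamma(\mathbb R^n)=\mu(K)$ for every finite $t$; so already at $t=1$ the required bound $\mu(f^{-1}(y)+K)\ge\mu(K)$ cannot be extracted, no matter how the variance of $\gamma$ is tuned. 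For intermediate $t$ it is no better: $\gamma(\nu_t(\mathbb R^k))$ equals the Gaussian mass of a radius-$t$ ball in $\mathbb R^{n-k}$, at most $v_{n-k}t^{n-k}/(2\pi\sigma^2)^{(n-k)/2}$, so to reach $t^{n-k}\mu(K)$ you must take $\sigma$ of order $1/\sqrt{n-k}$; but then no $1$-Lipschitz (Euclidean to $\|\cdot\|_K$) transport onto $\mu$ can exist, since a $1$-Lipschitz image of the Euclidean ball of radius $\approx\sigma\sqrt n$ carrying most of the Gaussian mass lies in a translate of $\sigma\sqrt n\,K$, whose $\mu$-mass is small. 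Moreover, for general $K$ and $\mu$ Klartag's transport is only $C$-Lipschitz with a universal constant $C>1$ (the exact constant $1$ is special to the product structure of the cube), and this constant is precisely the source of the loss in \cite[Theorem 5.7]{klartag2016}. Finally, letting $y$ depend on $t$ gains nothing in your scheme: Gromov's Gaussian theorem already produces a single $y$ valid for all $t$ simultaneously; the loss sits in the comparison value, not in the order of quantifiers. The inequality you describe as ``built into the choice of the variance'' is therefore false.

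The paper's proof uses no transportation. It reruns the pancake argument directly for $\mu$: split $K$ into pancakes $P_i$ of equal $\mu$-mass close to $(n-k)$-dimensional affine planes, let $c(P_i)$ be the point of maximal density of the projection of $\mu|_{P_i}$ (log-concave by Pr\'ekopa--Leindler), and invoke the elementary homothety lemma that a log-concave measure on a convex set $L\subset\mathbb R^\ell$ with maximal density at $c$ satisfies $\mu(t(L-c)+c)\ge t^{\ell}\mu(L)$. Fiberwise this shows that $tK+(1-t)c(P_i)$ captures a $t^{n-k}$-fraction of $\mu|_{P_i}$ up to pancake error, and the Borsuk--Ulam step is applied to the points $(1-t)c(P_i)$ rather than to $c(P_i)$ --- this is exactly where the dependence of $y$ on $t$ enters and where the sharp constant comes from. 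To salvage your route you would need a fundamentally different device than the Gaussian waist comparison.
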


\begin{proof}
Again, the procedure is similar to the proof of Gromov's waist of the Gaussian measure theorem~\cite{grom2003}, see also a clear and detailed exposition in~\cite{klartag2016}.

\begin{itemize}
\item
Split $K$ into many \emph{$(n-k)$-pancakes} $P_1,\ldots, P_N$ of equal $\mu$-measures, that is sets $\delta$-close to $(n-k)$-dimensional affine subspaces $T_i$ of $\mathbb R^n$. We choose $\delta \ll t$, after that we will go to the limit $\delta\to +0$, as it was done in \cite[Pages 20--21]{klartag2016}.

\item
Once a pancake $P$ is close to an $(n-k)$-dimensional affine subspace $T$, choose the point $c(P)$ where the density of $\mu|_P$ projected to $T$ is maximal. The projected density will be log-concave by the Pr\'ekopa--Leindler inequality \cite{prekopa1971} and the set of points of maximum density will be convex.

\item
A certain version of the Borsuk--Ulam-type theorem~\cite[Section 2]{klartag2016} (actually the same as \cite[Theorem 4]{mem2009}) must be used when creating $N=2^\ell$ pancakes so that $f(c(P_1)) = f(c(P_2)) = \dots = f(c(P_N))$. In fact, in \cite{grom2003,mem2009,klartag2016} some effort was put on choosing the central point $c(P_i)$ continuously in $P_i$, this was achieved by allowing it to be an approximate central point, see \cite[Lemma 4.5]{klartag2016} for example. For our purposes we only need to know that it is possible to have the equality $f(c(P_1)) = f(c(P_2)) = \dots = f(c(P_N))$ for points $c(P_i)$ that are ``sufficiently central'', meaning that an estimate on the volume of $P_i$ intersected with a certain convex body centered at $c(P_i)$ is spoiled by an arbitrarily small percent of error compared to the real central point.

In fact, \emph{we will have to replace $f(c(P_1)) = f(c(P_2)) = \dots = f(c(P_N))$ this with another equality} after a closer investigation of the problem.

\item
Establish the following simple
\begin{lemma}
If $\mu$ is a log-concave measure in a convex body $L\subset \mathbb R^\ell$ with maximal density at $c\in L$ then
\[
\mu(t(L-c)+c) \ge  t^\ell\mu(L).
\]
\end{lemma}
\begin{proof}
Let $\rho$ be the density of $\mu$. To prove the lemma it is sufficient to note that, from the log-concavity, the density does not increase on moving along rays from $c$. Now we put $x = c + u s$, $u\in\mathbb S^{\ell-1}, s\ge 0$ and the required estimate 
\[
\int_{su \in t (L-c)} \rho(u s) \ell s^{\ell-1}\; ds du \ge t^\ell \int_{su \in (L-c)} \rho(u s) \ell s^{\ell-1}\; ds du
\]
follows from the easy inequality 
\[
\int_{s \le t S} \rho(u s) ds^\ell \ge t^\ell \int_{s \le S} \rho(u s) ds^\ell
\]
after the integration over the directions $u\in\mathbb S^{\ell-1}$.
\end{proof}

\item
We now apply the lemma to the approximating $(n-k)$-dimensional affine subspace $T_i$ of $P_i$ in place of $\mathbb R^\ell$, the projection of $\mu|_{P_i}$ to $T_i$ in place of the lemma's $\mu$, and $K\cap T_i$ in place of $L$. The conclusion of the lemma means that the convex body $t(K-c(P_i)) + c(P_i)$, which is the homothetic copy of $K$ with center of homothety $c(P_i)$ and ratio $t$, contains at least the fraction $t^{n-k}$ of $\mu|_{P_i}$ up to some error arising from replacing $\mu|_{P_i}$ by its projection to $T_i$. This relative error tends to $0$ while $\delta\to +0$ ($\delta$ is the pancakes' thickness), like in the original pancake arguments in \cite{mem2009,klartag2016}.

Note that compared to \cite[Lemma 4.5]{klartag2016} we use homotheties with variable centers, but a uniform estimate for approximate centers is possible if the center of homothety in $K$ is at some distance $d$ from $\partial K$. This can be obtained, for example, by making $\mu$ zero at the $d$-neighborhood of $\partial K$, obtaining some $f^{-1}(y_d)$ with a worse estimate, and then going to the limit $d\to +0$ so that $y_d$ also tends to some $y$ with the needed estimate in the limit.

\item
Note that $t(K-c(P_i)) + c(P_i) = tK + (1-t)c(P_i)$. But in the statement of the theorem we want to Minkowski-add $tK$ to the fiber of the map, hence the fiber has to be approximated by the points $(1-t)c(P_i)$ and the Borsuk--Ulam-type theorem in fact has to be applied to the equality
\[
f((1-t)c(P_1)) = f((1-t)c(P_2)) = \dots = f((1-t)c(P_N)).
\]
This equality is achievable by the same Borsuk--Ulam-type theorem, since the values $f((1-t)c(P_i))$ depend on $P_i$ continuously and this is all we need in the topological proof of the Borsuk--Ulam-type theorem. \emph{Note that the dependence of $y$ on $t$ arises here, for different $t$ we will have different sets of pancakes.}

\item
Eventually, summing the estimates in all the pancakes we arrive to an estimate
\[
\mu \bigcup_{t=1}^N (tK + (1-t)c(P_i)) \ge \mu \bigcup_{t=1}^N \left( P_i\cap (tK + (1-t)c(P_i))\right) \ge t^{n-k} \mu K - \epsilon(\delta)
\]
with $\epsilon(\delta)\to+0$ as $\delta\to+0$.

\item
Put $y_\delta = f((1-t)c(P_1)) = \dots = f((1-t)c(P_N))$ and make going to the limit as $\delta\to+0$ and $y_\delta$ depending on $\delta$ tends to some $y$. The details are skipped here and can be found in~\cite[Sections 4 and 5]{klartag2016}.

\end{itemize}

\end{proof}

\begin{remark}
The interchange of quantifiers in this theorem is really needed compared to Klartag's result. Consider the map $f : (-1,1)^n \to \mathbb R$ from the unit cube given by 
\[
f(x_1,\ldots,x_n) = \max\{x_1,\ldots, x_n\}.
\]
We want a fiber of this map whose every $t$-neighborhood (in the $\ell_\infty$ norm) has volume at least $t$ times the volume of the cube. Putting $t=1$ we must have the whole cube in the neighborhood, this forces us to choose the fiber $f^{-1}(0)$. But for small $t$ we have the volume of the neighborhood asymptotically $2nt + o(t)$, which is smaller than expected $t2^n$ from the statement of the theorem, for all $n\ge 3$. This shows that it is impossible to take the same fiber for all values of $t$.
\end{remark}

\begin{remark}
\label{remark:no-neighborhood-version}
The similar phenomenon happens for the Euclidean ball $B^n$. Take a point $p$ on its boundary and put
\[
f(x) = |x - p|.
\]
Consider the value $t=1$ in the above theorem, when both $p$ and $-p$ has to be covered by the $t$-neighborhood of the fiber. This forces to choose the fiber $f(x)=1$, the one passing through the origin. But for small $t$ the theorem promises the surface area at least $v_n/2$, while the surface area of the spherical cap
\[
|x-p|=1,\quad |x|\le 1
\]
can be roughly estimated by its projection to the direction orthogonal to $p$ as $\le 2(\sqrt{3}/2)^{n-1} v_{n-1}$ (the factor $2$ arises from the maximal slope of the projected hypersurface equal to $\pi/3$ with $\cos \pi/3 = 2$). But
\[
2\left(\frac{\sqrt{3}}{2}\right)^{n-1} v_{n-1} = 2(\sqrt{3}/2)^{n-1} \frac{\pi^{\frac{n-1}{2}}}{\Gamma\left(\frac{n+1}{2}\right)}\ \text{is exponentially smaller than}\ \frac{v_n}{2} = \frac{\pi^{\frac{n}{2}}}{2\Gamma\left(\frac{n+2}{2}\right)},
\]
for large values of $n$. In particular, this shows that it is problematic to state a tight for all $t$-neighborhoods version of the waist of the Euclidean ball theorem with the right order of quantifiers.
\end{remark}

%\begin{remark}
%This result gives a non-tight estimate in case $K$ is a Euclidean ball, compare~\cite{ak2016ball}. Its statement for $K$ a cube also seems weaker than the statement of the cube waist theorem from~\cite{klartag2016}, but it is tight in the stated form. The advantage of our result is a simple unified statement for all convex bodies.
%\end{remark}

\section{Monotonicity of the Minkowski content}
	\label{sec:monotonicity}

In order to complete the proof of Theorem~\ref{theorem:CAT-waist} using Lemma~\ref{lemma:cat-anti-lipschitz} we arrive at the question whether the Minkowski content is monotone with respect to $1$-Lipschitz maps of complete Riemannian manifolds. We are going to establish a certain weak monotonicity property and prove our estimates for the waist in terms of the upper Minkowski content.

\begin{theorem}
\label{theorem:mink-monotone}
Let $M$ be a complete Riemannian manifold and $N$ be another complete Riemannian manifold. If $X\subseteq M$ is compact and the map $f : X\to N$ is $1$-Lipschitz $(\dist(f(x), f(y))\le \dist(x,y))$ then
\[
\lmink_k(f(X), N) \le \umink_k (X, M).
\]
\end{theorem}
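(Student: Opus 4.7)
The plan is a Vitali-type covering argument, transporting a packing of $X$ in $M$ through the $1$-Lipschitz map $f$ to a cover of $f(X)$ in $N$, and then sharpening the dimensional constant to $1$ via rectifiability.

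First I would fix a small $t>0$ and a scale $\tau=\alpha t$ with $\alpha>0$ to be chosen. Picking a maximal $\tau$-separated subset $\{x_i\}_{i=1}^N\subseteq X$ gives simultaneously a $\tau$-net in $X$ and a disjoint family of geodesic balls $B_M(x_i,\tau/2)$. Since small Riemannian balls in $M$ have volume $(1+o(1))v_n(\tau/2)^n$, this yields $N \le (1+o(1))\,\vol \nu_{\tau/2}(X,M)/\bigl(v_n(\tau/2)^n\bigr)$. The $1$-Lipschitz property of $f$ forces $\{f(x_i)\}$ to be a $\tau$-net in $f(X)$, hence
\[
\nu_t(f(X),N) \subseteq \bigcup_i B_N(f(x_i),\, t+\tau),
\]
and summing ball volumes gives $\vol\nu_t(f(X),N) \le (1+o(1))\,N v_n(t+\tau)^n$. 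Combining the two bounds and dividing by $v_{n-k}t^{n-k}$ produces
\[
\frac{\vol \nu_t(f(X),N)}{v_{n-k}t^{n-k}} \le \frac{2^k(1+\alpha)^n}{\alpha^k}\cdot \frac{\vol \nu_{\tau/2}(X,M)}{v_{n-k}(\tau/2)^{n-k}}\cdot (1+o(1)).
\]
Taking $\liminf_{t\to 0}$ on the left, and noting that as $\tau/2\to 0$ the second factor on the right is eventually majorised by $\umink_k(X,M)$, this step alone already yields $\lmink_k(f(X),N)\le C(n,k)\,\umink_k(X,M)$ for a dimensional constant $C(n,k)\ge 1$ (optimised at $\alpha=k/(n-k)$, giving $C(n,k)=2^k n^n/((n-k)^{n-k}k^k)$).

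The main obstacle is reducing this constant to $1$. Any purely ball-based covering must overestimate, because near a $k$-dimensional set the $t$-tube scales as $v_{n-k}t^{n-k}$ whereas $n$-balls of radius $\sim t$ scale as $v_n t^n$. To reach the sharp factor I would reduce to the rectifiable setting: assuming $\umink_k(X,M)<\infty$ (else the inequality is trivial), one has $\mathcal{H}^k(X)<\infty$, and the part of $X$ carrying positive $\mathcal{H}^k$-measure is countably $k$-rectifiable, so $f(X)$ is too, as a $1$-Lipschitz image. On countably $k$-rectifiable sets of finite $\mathcal{H}^k$-measure the lower Minkowski content coincides with the Hausdorff measure, so
\[
\lmink_k(f(X),N) = \mathcal{H}^k(f(X)) \le \mathcal{H}^k(X) \le \umink_k(X,M),
\]
where the middle inequality is the standard behaviour of Hausdorff measure under $1$-Lipschitz maps and the last is the universal comparison $\mathcal{H}^k\le \umink_k$. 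The delicate step is to carry this rectifiability reduction out cleanly in the Riemannian setting, in particular handling the non-additivity of the Minkowski content that the authors themselves flag as the source of difficulty.
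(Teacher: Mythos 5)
Your first (packing) step is fine but, as you note, only yields $\lmink_k(f(X),N)\le C(n,k)\,\umink_k(X,M)$ with $C(n,k)>1$; the theorem lives entirely in the sharpening, and that is where the proposal breaks down. The rectifiability reduction has two genuine gaps. First, finiteness of $\umink_k(X,M)$ (equivalently of $\mathcal{H}^k(X)$) does \emph{not} imply that $X$ is countably $k$-rectifiable: the four-corner Cantor set is compact, Ahlfors regular of dimension $1$, hence has finite upper Minkowski content, yet is purely $1$-unrectifiable; and even if $X$ were rectifiable, you would still need $f(X)$ to be, which is what you use. Second, and more fatally for the direction you need, the identity ``lower Minkowski content $=$ Hausdorff measure'' is false for merely countably $k$-rectifiable compact sets of finite $\mathcal{H}^k$-measure. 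Federer's theorem (3.2.39) requires the set to be a \emph{closed Lipschitz image of a bounded subset of $\mathbb{R}^k$} (or, in the Ambrosio--Fusco--Pallara version, a lower density bound $\mathcal{H}^k(S\cap B_\rho(x))\ge\gamma\rho^k$); without such hypotheses one can have $\lmink_k$ strictly larger than $\mathcal{H}^k$ — e.g.\ adjoin to a segment a disjoint compact $\mathcal{H}^k$-null set of positive lower Minkowski content. Since you need the \emph{upper} bound $\lmink_k(f(X),N)\le\mathcal{H}^k(f(X))$, this failure is exactly in the harmful direction, and the final comparison $\mathcal{H}^k(X)\le\umink_k(X,M)$ with constant exactly $1$ for a general compact $X$ also needs justification. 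So the chain $\lmink_k(f(X))=\mathcal{H}^k(f(X))\le\mathcal{H}^k(X)\le\umink_k(X,M)$ does not hold at the level of generality of the theorem, which assumes nothing about $X$ beyond compactness.

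For comparison, the paper's proof avoids any regularity of $X$: after two lemmas showing that the Minkowski content behaves correctly under isometric embeddings of the ambient manifold, it Nash-embeds $M$ and $N$ into a large $\mathbb{R}^L$ with $X$ and $f(X)$ in orthogonal complements, interpolates by $f_\alpha(x)=\cos\alpha\,x\oplus\sin\alpha\,f(x)$ (which is distance-nonincreasing in $\alpha$ at the relevant small scales, after shrinking the metric on $N$ to make $f$ strictly contracting), and invokes Csik\'os's Kneser--Poulsen-type theorem: the volume of a union of equal balls does not increase when the centers move so that pairwise distances do not increase. Approximating $t$-neighborhoods by finite unions of $t$-balls then gives $\vol\nu_t(f(X))\le\vol\nu_t(X)$ in $\mathbb{R}^L$ for each small $t$, and the mixed $\lmink$/$\umink$ comparison follows directly — precisely sidestepping the non-additivity and irregularity issues that sink the Hausdorff-measure detour.
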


In case the lower and the upper Minkowski content for $X$ and $f(X)$ coincide, we just say that the Minkowski content decreases under a $1$-Lipschitz map. The proof of this theorem consists of essentially known results and a particular case of it, for the Euclidean space, is proved independently in~\cite[Proposition 4.1]{csi2017}.

\subsection{Constructions for the Minkowski content and the Kneser--Poulsen-type results}

We start with investigating the dependence of the Minkowski content on the ambient manifold. We will generalize the argument from~\cite{kneser1955,resman2013} to arbitrary Riemannian manifolds other than $\mathbb R^n$. Start with the case of comparing $\nu_t(X, M)$ and $\nu_t(X, M\times \mathbb R^\ell)$, where the product takes the product Riemannian structure. From the Fubini theorem we have
\[
\vol \nu_t(X, M\times \mathbb R^\ell) = \int_0^t \ell v_\ell u^{\ell-1} \vol \nu_{\sqrt{t^2-u^2}} (X, M)\; du = \int_0^t \vol \nu_s (X, M) \ell v_\ell (t^2 - s^2)^{(\ell-2)/2} s\; ds.
\]
The inequality $\vol \nu_s (X, M) \ge V v_{n-k} s^{n-k}$ for a certain constant $V$ for sufficiently small $s$ implies the following inequality for sufficiently small $t$:
\begin{multline*}
\vol \nu_t(X, M\times \mathbb R^\ell) \ge V \int_0^t v_{n-k} s^{n-k} \ell v_\ell (t^2 - s^2)^{(\ell-2)/2} s\; ds =\\
= V t^{n+\ell - k} v_{n-k} v_\ell \ell \int_0^1 x^{n-k+1} (1 - x^2)^{(\ell-2)/2} \; dx 
= V v_{n+\ell-k} t^{n+\ell-k}, 
\end{multline*}
% 		= V t^{n+\ell - k} v_{n-k} v_\ell\frac{\ell\; \Gamma((n-k)/2+1) \Gamma(\ell/2) }{2 \Gamma((n-k+\ell)/2+1)} =\\
		% = V t^{n+\ell - k} \frac{\pi^{(n-k)/2}}{\Gamma((n-k)/2+1)} \frac{\pi^{\ell/2}}{\Gamma(\ell/2+1)} \frac{\Gamma((n-k)/2+1) \Gamma(\ell/2+1) }{\Gamma((n-k+\ell)/2+1)} =\\
		% =  V t^{n+\ell - k} \frac{\pi^{(n+\ell-k)/2}}{\Gamma((n+\ell-k)/2+1)} =
where we use the equality:
\begin{equation*}
v_{n-k} v_\ell \ell \int_0^1 x^{n-k+1} (1 - x^2)^{(\ell-2)/2} \; dx = v_{n+\ell-k}.
\end{equation*}
This can be seen from the geometric interpretation of $v_{n+\ell-k}$ or directly from formula $v_{n} = \pi^{n/2}/\Gamma(n/2+1)$. 

We get the same Minkowski content estimate for the higher ambient dimension. The same applies to an estimate from above, thus establishing

\begin{lemma}
\[
\umink_k (X, M\times \mathbb R^\ell) \le \umink_k (X, M)\quad\text{and}\quad \lmink_k (X, M\times \mathbb R^\ell) \ge \lmink_k (X, M).
\]
\end{lemma}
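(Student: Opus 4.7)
The plan is a straightforward Fubini computation. First I would express the $t$-neighborhood of $X \subseteq M \times \{0\}$ inside the product $M \times \mathbb{R}^\ell$ by slicing along Euclidean fibers: a point $(p,u) \in M \times \mathbb{R}^\ell$ lies at distance $\le t$ from $X$ if and only if $|u| \le t$ and $\dist_M(p, X) \le \sqrt{t^2 - |u|^2}$. Introducing polar coordinates $u = s\omega$ in $\mathbb{R}^\ell$ and using $\operatorname{vol}(\mathbb{S}^{\ell-1}) = \ell v_\ell$ yields
\[
\vol \nu_t(X, M \times \mathbb{R}^\ell) \;=\; \int_0^t \ell v_\ell\, s^{\ell-1}\, \vol \nu_{\sqrt{t^2-s^2}}(X, M)\, ds.
\]

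Next I would feed a one-sided bound into this identity. For the upper-content inequality, fix $\epsilon>0$; by definition of $\umink_k(X,M)$ there exists $t_0>0$ with $\vol \nu_r(X,M) \le (\umink_k(X,M)+\epsilon)\, v_{n-k}\, r^{n-k}$ for all $r \le t_0$. Since $\sqrt{t^2-s^2} \le t$ throughout the integration range, restricting to $t \le t_0$ makes the bound valid uniformly. Substituting, then rescaling $s = tx$ and collecting powers of $t$, gives
\[
\vol \nu_t(X, M \times \mathbb{R}^\ell) \;\le\; (\umink_k(X,M)+\epsilon)\, v_{n-k} v_\ell \ell\, t^{n+\ell-k} \int_0^1 x^{\ell-1}(1-x^2)^{(n-k)/2}\, dx.
\]
The numerical constant on the right is exactly $v_{n+\ell-k}$, by the beta-function identity
\[
v_{n-k} v_\ell \ell \int_0^1 x^{\ell-1}(1-x^2)^{(n-k)/2}\, dx \;=\; v_{n+\ell-k},
\]
which follows from $v_m = \pi^{m/2}/\Gamma(m/2+1)$ together with $\int_0^1 x^{a-1}(1-x^2)^{b-1} dx = \tfrac12 B(a/2,b)$ and the collapse $\ell\Gamma(\ell/2)/(2\Gamma(\ell/2+1)) = 1$. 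Dividing by $v_{n+\ell-k} t^{n+\ell-k}$, taking $\limsup_{t\to 0}$, and letting $\epsilon \to 0$ gives the first inequality. The lower-content inequality is obtained identically from the opposite pointwise bound $\vol \nu_r(X,M) \ge (\lmink_k(X,M)-\epsilon)\, v_{n-k}\, r^{n-k}$ and $\liminf$ in place of $\limsup$.

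There is no substantive obstacle; the argument is mechanical once the Fubini identity and the beta-function identity are in hand. The only mild subtlety is the uniformity point: the one-sided bounds on $\vol \nu_r(X,M)/r^{n-k}$ are only guaranteed for small $r$, but this automatically suffices because $\sqrt{t^2-s^2}\le t$ on the entire integration range, so shrinking $t$ places every integrand evaluation inside the valid regime. It is worth noting that this argument uses neither compactness of $X$ nor any regularity, and depends only on the fact that $M \times \mathbb{R}^\ell$ carries the Riemannian product metric with a flat Euclidean factor; in particular, the same Fubini slicing would fail if the second factor had nontrivial curvature.
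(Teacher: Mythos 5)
Your proof is correct and follows essentially the same route as the paper: Fubini slicing of the $t$-neighborhood over the flat $\mathbb R^\ell$ factor, a one-sided bound on $\vol\nu_r(X,M)$ valid for all small $r$, and the beta-function identity showing the resulting constant is exactly $v_{n+\ell-k}$. The only (cosmetic) difference is that you keep the Euclidean radius as the integration variable, whereas the paper substitutes to the $M$-radius, so your normalization identity $v_{n-k}v_\ell\ell\int_0^1 x^{\ell-1}(1-x^2)^{(n-k)/2}\,dx=v_{n+\ell-k}$ is the equivalent form of the one stated there.
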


Now we are going to extend this lemma to arbitrary emdeddings $M\subset M'$ of complete Riemannian manifolds:

\begin{lemma}
If $M\subset M'$ is an embedding of complete Riemannian manifolds and $M$ inherits its Riemannian structure from $M'$ then for a bounded subset $X\subset M$
\[
\umink_k (X, M') \le \umink_k (X, M)\quad\text{and}\quad \lmink_k (X, M') \ge \lmink_k (X, M).
\]
\end{lemma}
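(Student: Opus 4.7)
The plan is to exploit a tubular neighborhood of $M$ in $M'$ to reduce the comparison to the $M\times\mathbb{R}^\ell$ case handled by the previous lemma, where $\ell = \dim M' - \dim M$. Write $n=\dim M$ and $n'=n+\ell=\dim M'$.

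By Hopf--Rinow the closure $\overline{X}$ of $X$ in $M$ is compact; since $d_M(\cdot,X)=d_M(\cdot,\overline X)$, replacing $X$ by $\overline X$ does not change either Minkowski content, so I may assume $X$ is compact. Because $M$ is a complete Riemannian submanifold of $M'$ it is closed in $M'$, and there exists a uniform $\epsilon>0$ such that the normal exponential map $\exp^\perp\colon NM \to M'$ is a diffeomorphism from the open $\epsilon$-disk bundle over a compact neighborhood $K\Subset M$ of $X$ onto a tubular neighborhood of $K$ in $M'$; a standard Jacobi field computation gives a Jacobian $J(q,v) = 1 + O(|v|)$ uniformly on $K$.

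The central local ingredient is the Pythagorean expansion of the ambient distance: for $q, x \in K$ and $v \in N_qM$ with $d_M(q,x)$ and $|v|$ small,
\[
d_{M'}(\exp^\perp_q(v), x)^2 = d_M(q,x)^2 + |v|^2 + O\bigl((d_M(q,x)+|v|)^3\bigr),
\]
with implicit constants uniform on $K$; this follows from the Taylor expansion of the distance function in Fermi coordinates together with the second fundamental form of $M\subset M'$. Since $\nu_t(X,M')$ is contained in the $\epsilon$-tube of $M$ for $t$ small, this expansion lets me sandwich, for every $\epsilon'>0$ and every sufficiently small $t>0$,
\[
\exp^\perp\bigl\{(q,v) : d_M(q,X)^2 + |v|^2 < (1-\epsilon')^2 t^2\bigr\} \subseteq \nu_t(X, M') \subseteq \exp^\perp\bigl\{(q,v) : d_M(q,X)^2 + |v|^2 < (1+\epsilon')^2 t^2\bigr\}.
\]

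Taking volumes via Fubini over $(q,v)$ and applying the Jacobian estimate, each side reduces, up to $1+o(1)$ factors, to
\[
v_\ell \int_0^{t'} (t'^2-s^2)^{\ell/2}\, d\vol\nu_s(X,M), \qquad t'=(1\pm\epsilon')t,
\]
which is precisely the integral analyzed in the previous lemma. Feeding in $\vol\nu_s(X,M) \le (\umink_k(X,M)+\epsilon'')v_{n-k}s^{n-k}$ (valid for all small $s$) and, for any $V<\lmink_k(X,M)$, $\vol\nu_s(X,M) \ge V v_{n-k}s^{n-k}$ (valid for sufficiently small $s$), together with the beta-integral identity $v_\ell v_{n-k}\ell\int_0^1 x^{n-k+1}(1-x^2)^{\ell/2-1}dx = v_{n'-k}$ recalled in the previous lemma, yields
\[
\vol\nu_t(X,M') \le (1+o(1))(\umink_k(X,M)+\epsilon'')v_{n'-k}t^{n'-k}
\]
and the symmetric lower bound. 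Dividing by $v_{n'-k}t^{n'-k}$ and letting $\epsilon',\epsilon''\to 0$ (respectively $V\to\lmink_k(X,M)$) gives the two claimed inequalities. The main technical obstacle is establishing the Pythagorean expansion and the Jacobian estimate uniformly on $K$; both are standard, but they are where the Riemannian geometry of the embedding enters and must be controlled independently of the point of $X$.
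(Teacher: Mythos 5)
Your proof is correct and follows essentially the same route as the paper: both reduce the general embedding to the $M\times\mathbb{R}^\ell$ case of the preceding lemma via the normal exponential map over a compact neighborhood of $X$, with the distortion inside a thin tube controlled uniformly and the same Fubini/beta-integral computation finishing the job. The paper states the comparison more softly (the exponential map distorts the metric and volumes by at most a factor $1+\epsilon$ on a $\delta$-thin tube, so the Minkowski contents change by at most $(1+\epsilon)^{m+\ell+1}$) where you make it quantitative via the Pythagorean expansion of the ambient distance; one small caveat is that your parenthetical claim that a complete embedded submanifold is closed in $M'$ is false in general (e.g.\ a spiral accumulating on a circle in the plane), but it is not actually needed since your whole argument takes place over the compact neighborhood $K$.
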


\begin{proof}
Let $\dim M' - \dim M = \ell$ and identify the normal bundle to $M$ in $M'$ with $M\times \mathbb R^\ell$ with its corresponding Riemannian structure. Then the exponential map 
\[
\exp : M\times \mathbb R^\ell \to M'
\]
has the property: For every $\epsilon > 0$ there exits $\delta$ such that the metric and the volumes are deformed at most $(1+\epsilon)$ times when we restrict $\exp$ to $M\times B_\delta(0)$ in a neighborhood of the bounded set $X$. Now from the definition of the Minkowski content of a subset of $M$, we see that those values may change $(1+\epsilon)^{m+\ell+1}$ times at most under the map $\exp$. Since $m+\ell$ is a constant here and $\epsilon$ is arbitrary, we obtain the result.
\end{proof}

\begin{proof}[Proof of Theorem~\ref{theorem:mink-monotone}]
Assume the contrary: $\lmink_k(f(X), N) > \umink_k (X, M)$. Moreover, slightly decreasing the metric on $N$ we may still assume the contrary and also assume that the map $f$ is $\lambda$-Lipschitz with constant $\lambda<1$. The general case is obtained by going to the limit.

Let us isometrically embed $M$ and $N$ to some big $\mathbb R^L$. Such an embedding is possible, see~\cite{nash1956}, and allows us to work in the Euclidean space still assuming $\lmink_k(f(X), \mathbb R^L) > \umink_k (X, \mathbb R^L)$. Increasing $L$ further if necessary, we may also assume $X\subset V$, $f(X)\subset V^\perp$ for two complementary orthogonal subspaces $V,V^\perp\in\mathbb R^L$.

Note that after the embedding the map need not remain $\lambda$-Lipschitz in the metric of $\mathbb R^L$, since the isometric Riemannian embedding is a local notion and is not necessarily isometric in the category of metric spaces. But we know that the distances decrease locally under $f$. Using the compactness of $X$ and we may find $d>0$ such that for every pair $x,y\in X$ with $|f(x) - f(y)|\le d$ we still have 
\[
|f(x) - f(y)| \le \lambda' |x-y|
\]
for some $\lambda' < 1$. After that in the definition of the Minkowski content we only consider $t < d/2$.

Let us produce a whole family of maps $f_\alpha : X\to \mathbb R^L$ for $\alpha\in[0,\pi/2]$ such that $f_0=\id$, $f_{\pi/2} = f$ and for any two points $x, y \in X$ with $|f(x)-f(y)|\le d$ the values 
\[
|f_\alpha(x) - f_\alpha(y)|
\]
are decreasing in $\alpha$. This map is constructed by putting
\[
f_\alpha(x) = \cos \alpha\; x \oplus \sin \alpha\; f(x)
\]
if we assume the decomposition $V\oplus V^\perp = \mathbb R^L$ with $X\subset V$ and $f(X)\subset V^\perp$.

Now we invoke the result of Csik\'os~\cite{csi1998}: If a finite family of balls in $\mathbb R^L$ moves so that the distance between any pair of their centers does not increase then the volume of the union of the family of balls does not increase. We also remark that in the proof of this result in~\cite{csi1998} it is only used that the distances decrease for those pairs of balls that intersect each other or start to intersect each other in the process of motion, because the derivative of the distance appears in the estimate only if there is a wall between Voronoi cells of the pair of balls inside their intersection. This assumption is satisfied in our setting.

Now a $t$-neighborhood of $X$ and $f(X)$ can be approximated by a union of a finite set of balls of radius $t$ having centers at $X$ and $f(X)$. Therefore, for a fixed $t$, the volumes $\vol \nu_t(f_\alpha(X), \mathbb R^L)$ must not increase in $\alpha\in[0,\pi/2]$. This makes a contradiction with our assumption and completes the proof.
\end{proof}

\subsection{Proof of Theorem \ref{theorem:CAT-waist} and Corollary \ref{corollary:cat1waist}}
\label{sec:cat-proofs}

To prove Theorem \ref{theorem:CAT-waist} we use Lemma~\ref{lemma:cat-anti-lipschitz} to obtain an anti-$1$-Lipschits map $h$ from $B'_q(R)\subset \mathbb M^n_\kappa$ to $B_p(R)\subset M$. Then apply Theorem~\ref{theorem:model-ball-waist} to the composition $f\circ h$ to have a set $X\subset \mathbb M^n_\kappa$ such that $f(h(X))$ is one point and $\lmink_k X \ge \vol_k B^k(R)$, where $B^k(R)$ is the $k$-dimensional ball in the model space. Put $Y = h(X)$, since the map $h$ in anti-$1$-Lipschitz, Theorem~\ref{theorem:mink-monotone} asserts that $\umink_k Y \ge \vol_k B^k(R)$, and we are done since $f(Y)$ is a single point.
Corollary \ref{corollary:cat1waist} follows immediately.

\begin{remark}
In principle the argument with pancakes from Section~\ref{ssec:minkowski-waist} could be applied to the sphere and the pull-back of the metric from $M$ on the sphere, this corresponds to replacing the metric $g$ in the above argument by another metric $g'\ge g$. The difficulty here is that the Gromov--Memarian estimate for $\mu_c B_c(t,g_0)$ is hard to extend from balls to small ellipsoids; in our proof of Theorem~\ref{theorem:model-ball-waist} the argument worked just because the ellipsoids were sufficiently symmetric and it turned out to be sufficient to estimate the ellipsoid by a ball put inside it.
\end{remark}

\subsection{Bezdek--Connelly monotonicity}

Continuing the study of the $1$-Lipschitz monotonicity property of the Minkowski content and using the result of~\cite{bezdek2002pushing} we can state a stronger result for the Minkowski content in the plane:

\begin{theorem}
\label{theorem:mink-monotoneR2}
If $X\subseteq \mathbb R^2$ is bounded and a map $f : X\to \mathbb R^2$ is $1$-Lipschitz then
\[
\lmink_k(f(X), \mathbb R^2) \le \lmink_k (X, \mathbb R^2)\quad\text{and}\quad \umink_k(f(X), \mathbb R^2) \le \umink_k (X, \mathbb R^2).
\]
\end{theorem}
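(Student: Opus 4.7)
The plan is to reduce to a finite point approximation and then apply the planar case of the Kneser--Poulsen conjecture, proved by Bezdek and Connelly~\cite{bezdek2002pushing}. The key intermediate claim is that for every sufficiently small $t>0$ one has the pointwise in $t$ inequality
\[
\vol \nu_t(f(X),\mathbb R^2) \le \vol \nu_t(X,\mathbb R^2).
\]
Once this is established, both the lower and the upper Minkowski content inequalities follow by dividing by $v_{2-k} t^{2-k}$ and taking $\liminf$ or $\limsup$ as $t \to +0$; no further argument specific to the kind of Minkowski content is needed.

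To prove the pointwise inequality, I would fix $t>0$ and an auxiliary $\epsilon>0$. Since $X$ is bounded, $\overline X$ is compact and $X$ is dense in $\overline X$, so we can pick a finite subset $\{x_1,\dots,x_N\}\subset X$ that is an $\epsilon$-net of $\overline X$. Setting $y_i=f(x_i)$, the $1$-Lipschitz hypothesis gives $|y_i-y_j|\le |x_i-x_j|$ for all pairs, and the same hypothesis shows that $\{y_1,\dots,y_N\}$ is an $\epsilon$-net of $f(X)$. The Bezdek--Connelly theorem asserts that whenever two finite planar point configurations satisfy such a pairwise non-expansion condition, the union of Euclidean disks of a common radius around the contracted configuration has area no larger than the union of disks around the original. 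Applied with radius $t+\epsilon$, this yields
\[
\vol\bigcup_{i=1}^N B(y_i,t+\epsilon) \le \vol\bigcup_{i=1}^N B(x_i,t+\epsilon).
\]
Combining this with the inclusions $\nu_t(f(X),\mathbb R^2)\subseteq \bigcup_i B(y_i,t+\epsilon)$ (from $\epsilon$-density of $\{y_i\}$ in $f(X)$) and $\bigcup_i B(x_i,t+\epsilon)\subseteq \nu_{t+\epsilon}(X,\mathbb R^2)$, one obtains
\[
\vol \nu_t(f(X),\mathbb R^2) \le \vol \nu_{t+\epsilon}(X,\mathbb R^2).
\]
Letting $\epsilon\downarrow 0$, the right-hand side converges to $\vol \nu_t(X,\mathbb R^2)$ by monotone convergence applied to the decreasing family of neighborhoods of the compact set $\overline X$, and this establishes the desired pointwise inequality.

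The main obstacle is essentially the invocation of Bezdek--Connelly: it is precisely the planar case of the Kneser--Poulsen conjecture, which remains open in higher dimensions, so the argument cannot be extended beyond $\mathbb R^2$. The remaining ingredients --- the net approximation, interchange with closure, and passage to the limit in $\epsilon$ --- are routine. Compared to the use of Csik\'os' theorem in Theorem~\ref{theorem:mink-monotone}, Bezdek--Connelly requires no continuous contraction joining the two configurations, and this is exactly what allows us to bound $\lmink_k(f(X),\mathbb R^2)$ by $\lmink_k(X,\mathbb R^2)$ rather than by $\umink_k(X,\mathbb R^2)$.
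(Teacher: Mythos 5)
Your argument is correct and takes essentially the same route as the paper, whose proof is a one-line invocation of the Bezdek--Connelly solution of the planar Kneser--Poulsen conjecture to get the pointwise-in-$t$ inequality $\vol \nu_t(f(X),\mathbb R^2)\le \vol \nu_t(X,\mathbb R^2)$; you have simply written out the routine finite $\epsilon$-net approximation that the paper leaves implicit. The only micro-detail to note is that the $\epsilon\downarrow 0$ limit produces the closed $t$-neighborhood $\{\dist(\cdot,X)\le t\}$, whose volume agrees with that of $\nu_t(X)$ because the level set $\{\dist(\cdot,X)=t\}$ is Lebesgue-null (or one can absorb this discrepancy harmlessly into the subsequent $t\to+0$ limit).
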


\begin{proof}
In this case the positive solution of the Kneser--Poulsen conjecture in the plane for the monotonicity of the volume of the union of balls under $1$-Lipschitz maps of their centers can be invoked directly to show that the volume of a $t$-neighborhood is not increasing under $1$-Lipschitz maps. This establishes the result.
\end{proof}

A further generalization is possible if we look closer at the proof in~\cite{bezdek2002pushing}:

\begin{theorem}
\label{theorem:mink-monotoneRnplus2}
Assume $X\subseteq \mathbb R^n$ is compact and a map $f : X\to \mathbb R^n$ is $1$-Lipschitz and injective. Moreover, assume that after the natural inclusion $\mathbb R^n\subset\mathbb R^{n+2}$ the map $f$ can be joined by a homotopy $f_s$ with the identity, $f_0 = \id_X, f_1 = f$, so that $\dist(f_s(x), f_s(y))$ is non-increasing in $s$ for any $x,y\in X$. Then
\[
\lmink_k(f(X), \mathbb R^n) \le \lmink_k (X, \mathbb R^n)\quad\text{and}\quad \umink_k(f(X), \mathbb R^n) \le \umink_k (X, \mathbb R^n).
\]
\end{theorem}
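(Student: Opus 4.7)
The plan is to follow the template of Theorem~\ref{theorem:mink-monotoneR2}, replacing the planar Kneser--Poulsen theorem with the more powerful result of Bezdek--Connelly from~\cite{bezdek2002pushing}. That result says: if two finite configurations $\{x_1,\ldots,x_N\},\{y_1,\ldots,y_N\}\subset\mathbb R^n$ are joined by a continuous motion of $N$ labeled points in $\mathbb R^{n+2}$ along which every pairwise distance is non-increasing, then for every $r>0$,
\[
\vol_n\Bigl(\bigcup_{i=1}^N B(y_i,r)\Bigr)\le\vol_n\Bigl(\bigcup_{i=1}^N B(x_i,r)\Bigr),
\]
the volumes being computed in $\mathbb R^n$ as unions of $n$-dimensional balls.

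To apply this, fix $t>0$ and a small $\delta>0$. Since $X$ is compact, choose a finite $\delta$-net $X_0=\{x_1,\ldots,x_N\}\subset X$; because $f$ is $1$-Lipschitz, the image $f(X_0)$ is automatically $\delta$-dense in $f(X)$. The restriction of the hypothesized homotopy $f_s$ to $X_0$ provides a continuous motion of $N$ points in $\mathbb R^{n+2}$ from $\{x_1,\ldots,x_N\}$ to $\{f(x_1),\ldots,f(x_N)\}$ along which all pairwise distances are non-increasing, so the Bezdek--Connelly theorem applied at radius $r=t+\delta$ yields
\[
\vol_n\Bigl(\bigcup_{i=1}^N B(f(x_i),t+\delta)\Bigr)\le\vol_n\Bigl(\bigcup_{i=1}^N B(x_i,t+\delta)\Bigr).
\]
The $\delta$-density of $f(X_0)$ in $f(X)$ gives $\nu_t(f(X),\mathbb R^n)\subseteq\bigcup_i B(f(x_i),t+\delta)$, while plainly $\bigcup_i B(x_i,t+\delta)\subseteq\nu_{t+\delta}(X,\mathbb R^n)$. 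Combining these inclusions with the displayed inequality we obtain
\[
\vol\,\nu_t(f(X),\mathbb R^n)\le\vol\,\nu_{t+\delta}(X,\mathbb R^n)\qquad\text{for every }\delta>0.
\]

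To extract the Minkowski-content inequalities, specialize to $\delta=t^2$, divide by $v_{n-k}t^{n-k}$, and rewrite the right-hand side as
\[
\frac{\vol\,\nu_{t+t^2}(X,\mathbb R^n)}{v_{n-k}(t+t^2)^{n-k}}\cdot(1+t)^{n-k}.
\]
Taking $\liminf$ and $\limsup$ as $t\to 0^+$, the factor $(1+t)^{n-k}$ tends to $1$ and the preceding bracketed ratio tends (along the parametrization $s=t+t^2$, which is a bijection of a right neighborhood of $0$ to itself) to $\lmink_k(X,\mathbb R^n)$ and $\umink_k(X,\mathbb R^n)$ respectively; both claims of the theorem follow. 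The only real obstacle is in the black box: one must know that Bezdek--Connelly really delivers monotonicity of the $n$-dimensional volume of the union of $n$-balls at the two endpoints, not just the ambient $(n+2)$-dimensional volume that a plain Fubini/slicing argument would supply; this is exactly the content of their Schl\"afli-formula-based proof and is what makes the passage through $\mathbb R^{n+2}$ valuable. The injectivity hypothesis on $f$ is not used directly in the argument above and may be viewed as inherited from the labeled setup of the Bezdek--Connelly motion.
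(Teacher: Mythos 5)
Your proof is correct and takes essentially the same route as the paper: both reduce the statement to the Bezdek--Connelly Kneser--Poulsen theorem for configurations joined by a distance-non-increasing motion in $\mathbb R^{n+2}$, applied to finite approximations of the compact set $X$. The only difference is packaging --- the paper invokes the internal mechanism of that result (monotonicity of the $(n+1)$-volume of the boundary $\partial_t f_s(X)$ in $\mathbb R^{n+2}$ together with the Archimedes-type projection back to $\nu_t$ in $\mathbb R^n$), whereas you quote the endpoint volume inequality as a black box and make the $\delta$-net approximation and the $t\to 0$ limit more explicit; your observation that injectivity of $f$ is not actually needed is also consistent with the paper's argument.
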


\begin{proof}
Consider the $t$-neighborhood of $X$ in $\mathbb R^{n+2}$ and its boundary $\partial_t X$. While $t$ is fixed we may assume $X$ finite, the general case is obtained by going to the limit and approximating $X$ by finite sets. Consider the similarly defined $\partial_t f(X)$, the boundary of the $t$-neighborhood of $f(X)$ in $\mathbb R^{n+2}$. 

The sets $\partial_t X$ and $\partial_t f(X)$ are piece-wise smooth manifolds, built of patches of spheres. The metric projection of $\partial_t X$ to $\mathbb R^n$ projects it to the $t$-neighborhood $\nu_t X$ of $X$ in $\mathbb R^n$. The projection of the Riemannian $(n+1)$-dimensional measure on $\partial_t X$ is the uniform measure in $\nu_t X$ up to factor $2\pi/t$, see~\cite{archimedes225onsphere,bezdek2002pushing}. 

Consider now $\partial_t f_s(X)$ for varying $s$. This is always a boundary of a union of spheres and the pairwise distances between the centers do not increase with $s$. As it was shown in~\cite{bezdek2002pushing}, developing the Voronoi partition technique from~\cite{csi1998}, the $(n+1)$-volume of $\partial_t f_s(X)$ will then be non-increasing in $s$ thus establishing 
\[
\vol_n \nu_t f(X) \le \vol_n \nu_t X
\]
in $\mathbb R^n$ and completing the proof.
\end{proof}

\subsection{Gaussian version of the Minkowski content}

In order to improve the embedding and the $1$-Lipschitz monotonicity property of the Minkowski content we may just modify its definition. The general idea is to use the embedding to higher dimension right from the start and actually go to the limit when the added dimension goes to infinity. In this case the projection of a neighborhood of a set becomes a density proportional to the exponent of the (appropriately scaled) minus distance function squared.

\begin{definition}
For a compact set $X$ in a complete Riemannian manifold $M$ define the lower and upper \emph{Gaussian Minkowski content}
\[
\lgauss_k(X, M) = \liminf_{u\to +\infty} u^{\dim M - k} \int_M e^{-\pi u^2 \dist(x, X)^2} d\vol(x),
\]
\[
\ugauss_k(X, M) = \limsup_{u\to +\infty} u^{\dim M - k} \int_M e^{-\pi u^2 \dist(x, X)^2} d\vol(x),
\]
where $\vol$ is the Riemannian volume of $M$.
\end{definition}

For a tricky manifold $M$, that we do not encounter in this paper, this integral could diverge. This can be remedied by taking the integral not over the entire $M$, but over an $\varepsilon$-neighborhood of $X$ for some $\varepsilon > 0$. Choosing a different $\epsilon'$ only modifies the integral by an exponentially decaying (in $u$) term and therefore does not affect the limit in such adjusted definition.

Let $\dim M = n$ and write down the formula (Fubini's theorem for the subgraph of $y=e^{-\pi u^2 \dist(x, X)^2}$)
\[
\int_M e^{-\pi u^2 \dist(x, X)^2} d\vol(x) = \int_0^1  \vol \left\{ x : d(x, X)\le \sqrt{-\frac{\ln y}{\pi u^2}} \right\} dy
\]
and substitute $t = \sqrt{-\frac{\ln y}{\pi u^2}}$ to obtain
\begin{multline*}
\int_M e^{-\pi u^2 \dist(x, X)^2} d\vol(x) = \int_0^{+\infty} \vol \nu_t (X, M)\; d(-  e^{-\pi u^2t^2}) =\\
= \int_0^{+\infty} \frac{\vol \nu_t (X, M)}{v_{n-k} t^{n-k}}\;  v_{n-k} t^{n-k} d(-  e^{-\pi u^2t^2}).
\end{multline*}
These formulas actually express the right hand sides of the definitions of the Gaussian Minkowski content as averages (with density $v_{n-k} (ut)^{n-k} ( - e^{-\pi (ut)^2})'$ that is easily checked to integrate to $1$) of the right hand sides in the definition (e.g. \eqref{equation:lower-mink}) of the ordinary Minkowski content. In particular, we always have
\[
\lmink_k(X, M)\le \lgauss_k(X, M) \le \ugauss_k(X, M) \le \umink_k(X, M),
\]
and therefore there remain more chances of the equality 
\[
\lgauss_k(X, M) = \ugauss_k(X, M) = \gauss_k(X, M).
\]

\begin{theorem}
\label{theorem:gauss-product}
Assume $X\subseteq M$ and $Y\subseteq N$ are compacta in their respective complete Riemannian manifolds, and assume $\lgauss_m(Y, N) = \ugauss_m(Y, N) = \gauss_m(Y, N)$. Then
\[
\lgauss_{k+m} (X\times Y, M\times N) = \lgauss_k(X,M)\cdot \gauss_m(Y,N),
\]
\[
\ugauss_{k+m} (X\times Y, M\times N) = \ugauss_k(X,M)\cdot \gauss_m(Y,N).
\]
\end{theorem}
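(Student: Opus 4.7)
The plan is to reduce the statement to a direct Fubini computation, combined with the standard calculus fact that the $\liminf$ (resp.\ $\limsup$) of a product behaves multiplicatively when one factor has a genuine limit.

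First I would use the product Riemannian structure on $M\times N$. For any point $(x,y)\in M\times N$ and any $(x',y')\in X\times Y$, the distance in the product metric satisfies $\dist((x,y),(x',y'))^2 = \dist(x,x')^2 + \dist(y,y')^2$. Minimizing the right-hand side separately in $x'\in X$ and $y'\in Y$ yields
\[
\dist\bigl((x,y),\, X\times Y\bigr)^2 = \dist(x,X)^2 + \dist(y,Y)^2.
\]
Consequently the Gaussian kernel factors, and Fubini's theorem gives
\[
\int_{M\times N} e^{-\pi u^2 \dist((x,y),\, X\times Y)^2}\, d\vol_{M\times N}
= \Bigl(\int_M e^{-\pi u^2 \dist(x,X)^2}\, d\vol_M\Bigr)\cdot \Bigl(\int_N e^{-\pi u^2 \dist(y,Y)^2}\, d\vol_N\Bigr).
\]
Multiplying both sides by $u^{\dim M + \dim N - (k+m)} = u^{\dim M - k}\cdot u^{\dim N - m}$, we obtain $A(u)\cdot B(u)$, where
\[
A(u) = u^{\dim M - k}\int_M e^{-\pi u^2 \dist(x,X)^2}\, d\vol_M,\qquad B(u) = u^{\dim N - m}\int_N e^{-\pi u^2 \dist(y,Y)^2}\, d\vol_N.
\]

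Second, I would invoke the hypothesis that $\lgauss_m(Y,N) = \ugauss_m(Y,N) = \gauss_m(Y,N)$, which means $B(u)$ has an honest limit $L = \gauss_m(Y,N)$ as $u\to+\infty$. Since all quantities involved are nonnegative and $B(u)\to L$, the elementary identities
\[
\liminf_{u\to+\infty} A(u)B(u) = L\cdot \liminf_{u\to+\infty} A(u),\qquad
\limsup_{u\to+\infty} A(u)B(u) = L\cdot \limsup_{u\to+\infty} A(u)
\]
hold (the standard $\epsilon$-$\delta$ argument: for any $\epsilon>0$ one has $(L-\epsilon)A(u)\le A(u)B(u)\le (L+\epsilon)A(u)$ for all large $u$). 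Translating this back via the definitions yields precisely
\[
\lgauss_{k+m}(X\times Y,\, M\times N) = \lgauss_k(X,M)\cdot \gauss_m(Y,N)
\]
and the analogous equality for $\ugauss$, as required.

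The computation is mostly routine; the only place where genuine care is needed is the verification of the product-metric distance identity (which relies on the product Riemannian structure and completeness to ensure the minimizing geodesic between $(x,y)$ and a nearest point of $X\times Y$ is the concatenation of minimizers in the two factors) and the treatment of the degenerate case $L=0$ or $L=+\infty$ in the limit-of-products step. The degenerate cases do not actually occur under the finiteness hypotheses implicit in the statement: if $L=0$ then both sides of the claimed equalities are $0$ provided $A(u)$ stays bounded, while if $A(u)$ is unbounded the statement should be interpreted with the convention $\infty\cdot 0$ handled by a direct comparison of tails of the two Gaussian integrals. In the generic case, $L\in(0,\infty)$ and the argument sketched above is clean.
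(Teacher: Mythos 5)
Your proposal is correct and follows essentially the same route as the paper: the paper's proof likewise observes that in the product Riemannian metric $\dist((x,y),X\times Y)^2=\dist(x,X)^2+\dist(y,Y)^2$, so the Gaussian kernel factors, and then applies Fubini together with the (implicit) fact that one factor has an honest limit. Your write-up merely makes explicit the $\liminf$/$\limsup$-of-a-product step that the paper leaves unstated.
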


\begin{proof}
Observe that in the Riemannian product distance we have
\[
e^{-\pi u^2 \dist(x\times y, X\times Y)^2} = e^{-\pi u^2 \dist(x, X)^2}\cdot e^{-\pi u^2 \dist(y, Y)^2}.
\]
Applying the Fubini theorem gives the proof.
\end{proof}

\begin{theorem}
\label{theorem:gauss-embedding}
If $M\subset M'$ is an embedding of complete Riemannian manifolds and $M$ inherits its Riemannian structure from $M'$ then for a compact subset $X\subset M$
\[
\lgauss_k (X, M') = \lgauss_k (X, M),\quad \ugauss_k (X, M') = \ugauss_k (X, M)
\]
\end{theorem}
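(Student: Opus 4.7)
Set $n = \dim M$ and $\ell = \dim M' - n$. A direct computation gives $\gauss_0(\{0\}, \mathbb R^\ell) = \lim_{u\to\infty} u^\ell \int_{\mathbb R^\ell} e^{-\pi u^2|v|^2}\,dv = 1$, so Theorem~\ref{theorem:gauss-product} yields $\lgauss_k(X\times\{0\}, M\times\mathbb R^\ell) = \lgauss_k(X, M)$ and similarly for $\ugauss$. Hence the theorem is equivalent to comparing the Gaussian Minkowski content of $X$ in $M'$ with that of $X\times\{0\}$ in $M\times \mathbb R^\ell$, a purely local problem near the compact set $X$ that involves the second fundamental form of $M$ in $M'$.

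Fix $\delta > 0$, choose a relatively compact open $U\supset X$ in $M$ together with $\epsilon > 0$ smaller than the normal injectivity radius of $M$ in $M'$ over $U$, so that $\Phi(x, v) = \exp_x^\perp v$ is a diffeomorphism from the open $\epsilon$-disk bundle of $NM|_U$ onto $\nu_\epsilon(U, M')$. By the Gauss lemma the pulled-back metric has the form $g_M(x; v) + dv^2$ with $g_M(x; 0) = g_M(x)$ and $g_M(x; v) = g_M(x) + O(|v|)$ uniformly in $\bar U$; shrinking $\epsilon$, this is $(1\pm\delta)$-close to the product metric on $NM|_U$, which is locally $M \times \mathbb R^\ell$. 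Consequently the Jacobian of $\Phi$ lies in $[1-\delta, 1+\delta]$ and for every $y = \Phi(x,v)$ with $x\in U$, $|v|<\epsilon$,
\[
(1-\delta)\bigl(|v|^2 + \dist_M(x,X)^2\bigr) \;\le\; \dist_{M'}(y, X)^2 \;\le\; (1+\delta)\bigl(|v|^2 + \dist_M(x,X)^2\bigr).
\]
The upper bound uses the diagonal competitor $t\mapsto \Phi(\gamma(t), (1-t)v)$ with $\gamma$ an $M$-minimizer from $x$ to $X$; the lower bound compares a shortest $M'$-geodesic from $y$ to $X$ with its Fermi projection, the case that this geodesic escapes the tube being absorbed into an $e^{-\pi u^2\epsilon^2}$-tail below.

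Plugging in and using Fubini over $NM|_U$, the portion of $u^{n+\ell-k}\int e^{-\pi u^2 \dist^2}\,d\vol_{M'}$ coming from $\nu_\epsilon(X, M')$ is sandwiched by
\[
(1+O(\delta)) \cdot u^{n+\ell-k}\int_{U}\int_{|v|<\epsilon} e^{-\pi u^2(1\pm\delta)(|v|^2 + \dist_M(x,X)^2)}\,dv\,d\vol_M(x);
\]
the inner $v$-integral equals $\bigl((1\pm\delta)u^2\bigr)^{-\ell/2}$ up to $O(e^{-\pi u^2\epsilon^2/2})$, so substituting $\tilde u = u\sqrt{1\pm\delta}$ converts the whole expression into a $(1+O(\delta))$-multiple of $\tilde u^{n-k}\int_U e^{-\pi \tilde u^2 \dist_M(\cdot,X)^2}\,d\vol_M$; extending the domain from $U$ to $M$ and the tail contribution of $M'\setminus\nu_\epsilon(X, M')$ are both $o(u^{-N})$ for every $N$ by the crude bound $e^{-\pi u^2\dist^2} \le e^{-\pi u^2\epsilon^2/2}\cdot e^{-\pi u^2\dist^2/2}$. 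Taking $\liminf$ or $\limsup$ as $u\to\infty$ and then $\delta\to 0$ proves both identities in the theorem. The delicate step is the lower bound on $\dist_{M'}(y,X)^2$: one must verify uniformly over the tube that the second fundamental form of $M\subset M'$ contributes to $g_M(x; v) - g_M(x)$ only at order $|v|$, so that compactness of $\bar U$ absorbs the error into the $(1-\delta)$ factor as $\epsilon\to 0$; the case of an $M'$-geodesic leaving the tube is harmless because its length is then at least $\epsilon$, which swallows the integrand.
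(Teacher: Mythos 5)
Your argument is correct and is essentially the paper's own proof: reduce to the product situation via Theorem~\ref{theorem:gauss-product} (equivalently, the fiberwise Gaussian integral), then use the normal exponential map of a tubular neighborhood, where the metric is $(1\pm\delta)$-close to the product metric uniformly near the compact set $X$, with all contributions outside the thin tube exponentially small in $u$ — you merely carry out in detail the distance sandwich and Jacobian bounds that the paper states as ``at most $\epsilon$ percent different''. One small inaccuracy that does not affect the proof: in Fermi coordinates the pulled-back metric is not exactly of the split form $g_M(x;v)+dv^2$ (there are $O(|v|)$ mixed terms coming from the normal connection and second fundamental form), but since you only use $(1\pm\delta)$-closeness to the product metric on a compact tube with $\epsilon$ small, the estimate you need is still valid.
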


\begin{proof}
Observe that the case of the embedding $M\subset M\times \mathbb R^\ell$ follows from Theorem~\ref{theorem:gauss-product} by putting $Y=\{0\}\subset\mathbb R^\ell$. In the general case, for a given $\epsilon>0$, we may find $\delta>0$ such that the tubular $\delta$-neighborhood of $M$ in $M'$ (in the neighborhood of $X$) has Riemannian metric at most $\epsilon$ percent different from the metric of $M\times \mathbb R^\ell$ pushed forward to $M'$ by the exponential map of the tubular neighborhood. Since we may drop the $\delta$-distant part of the integral in the definition, it follows that we have the needed estimates up to $O(\epsilon)$ percent of error. This implies the result because $\epsilon$ is an arbitrary positive number.
\end{proof}

\begin{theorem}
\label{theorem:gauss-monotone}
Let $X\subseteq M$ be compact, $N$ be another complete Riemannian manifold, and the map $f : X\to N$ be $1$-Lipschitz $(\dist(f(x), f(y))\le \dist(x,y))$ then
\[
\lgauss_k(f(X), N) \le \lgauss_k (X, M),\quad \ugauss_k(f(X), N) \le \ugauss_k (X, M).
\]
\end{theorem}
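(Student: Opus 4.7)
\medskip

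\noindent\textbf{Proof plan for Theorem~\ref{theorem:gauss-monotone}.} The plan is to adapt the Csik\'os-type argument used to prove Theorem~\ref{theorem:mink-monotone}, and to exploit the exponential decay of the Gaussian weight in order to overcome the fact that isometric Riemannian embeddings are only local isometries in the chordal metric.

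\medskip

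\noindent\textbf{Reduction to Euclidean ambient spaces.} By Theorem~\ref{theorem:gauss-embedding}, the Gaussian Minkowski content is invariant under isometric Riemannian embedding; by Nash's embedding theorem~\cite{nash1956}, I may assume that both $M$ and $N$ are Riemannian submanifolds of a common $\mathbb R^L$ and, by choosing $L$ large enough, that $X\subset V$ and $f(X)\subset V^\perp$ for two orthogonal complementary subspaces of $\mathbb R^L$. As in the proof of Theorem~\ref{theorem:mink-monotone}, a limit argument allows me to shrink the metric on $N$ slightly and assume $f$ is $\lambda$-Lipschitz with $\lambda<1$; by compactness of $X$, there is $d>0$ such that any pair $x,y\in X$ with $|x-y|\le d$ (Euclidean in $\mathbb R^L$) satisfies $|f(x)-f(y)|\le \lambda'|x-y|$ for some $\lambda'<1$.

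\medskip

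\noindent\textbf{The rotation homotopy.} Define $f_\alpha(x)=\cos\alpha\cdot x+\sin\alpha\cdot f(x)$ for $\alpha\in[0,\pi/2]$, so that $f_0=\id_X$ and $f_{\pi/2}=f$. For any pair $(x,y)$,
\[
|f_\alpha(x)-f_\alpha(y)|^2 = \cos^2\alpha\,|x-y|^2+\sin^2\alpha\,|f(x)-f(y)|^2,
\]
which is monotone in $\alpha$. For $t<d/2$, any pair of balls $B_{f_\alpha(x)}(t)$, $B_{f_\alpha(y)}(t)$ that meets at some $\alpha^\star$ satisfies $\min(|x-y|,|f(x)-f(y)|)\le 2t<d$, and a short case analysis (combining the chord-versus-geodesic comparison with the local Lipschitz estimate) yields $|f(x)-f(y)|\le|x-y|$. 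Thus along the homotopy, distances between centers of meeting balls are non-increasing. Approximating $X$ by finite $\epsilon$-nets and invoking the refined Csik\'os~\cite{csi1998} result exactly as in the proof of Theorem~\ref{theorem:mink-monotone}, I obtain
\[
\vol\nu_t(f(X),\mathbb R^L)\le \vol\nu_t(X,\mathbb R^L)\qquad\text{for every }t<d/2.
\]

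\medskip

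\noindent\textbf{From pointwise volume monotonicity to the Gaussian content.} Using the Fubini identity already derived in the paper,
\[
\int_{\mathbb R^L} e^{-\pi u^2 \dist(z,Z)^2}\,dz = \int_0^\infty \vol\nu_t(Z,\mathbb R^L)\cdot 2\pi u^2 t\, e^{-\pi u^2 t^2}\,dt,
\]
I split the integral at $t=d/2$. On $[0,d/2)$ the integrand for $Z=f(X)$ is bounded above by that for $Z=X$ by the pointwise comparison just established. On $[d/2,\infty)$ I use the trivial bound $\vol\nu_t(f(X),\mathbb R^L)=O(t^L)$, so that the tail is $O\!\bigl(u^2 e^{-\pi u^2 d^2/4}\bigr)$ (times a polynomial in $u$), and this is exponentially small even after multiplication by $u^{L-k}$. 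Taking $\liminf_{u\to\infty}$ (resp.\ $\limsup$) yields $\lgauss_k(f(X),\mathbb R^L)\le \lgauss_k(X,\mathbb R^L)$ (resp.\ for $\ugauss_k$). Undoing the metric shrinking on $N$ by letting $\lambda\nearrow 1$ and appealing once more to Theorem~\ref{theorem:gauss-embedding} to return to the original ambient manifolds completes the proof.

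\medskip

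\noindent\textbf{Main obstacle.} The delicate point is the one already noticed in Theorem~\ref{theorem:mink-monotone}: the Nash embedding is only a Riemannian isometry, so $f$ is not globally $1$-Lipschitz in the chord metric of $\mathbb R^L$, and the Csik\'os/Kneser--Poulsen volume monotonicity therefore holds only for neighborhoods of sufficiently small radius $t$. What makes the Gaussian version go through cleanly is precisely that the weight $e^{-\pi u^2 t^2}$ quenches all the $t\ge d/2$ contributions in the limit $u\to\infty$, so there is no need to push the Csik\'os comparison outside its safe range.
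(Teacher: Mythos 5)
Your proposal is correct and follows essentially the same route as the paper's proof: Nash embedding into $\mathbb R^L$ (where Theorem~\ref{theorem:gauss-embedding} makes the Gaussian content invariant), the rotation homotopy $f_\alpha$, Csik\'os's theorem~\cite{csi1998} applied only to pairs of balls that actually meet, and the observation that the Gaussian weight kills all contributions from radii $t\ge d/2$. Your splitting of the Fubini integral at $t=d/2$ is just a rephrasing of the paper's restriction of the integration domain to the $d/2$-neighborhood of $f_\alpha(X)$, so the two arguments coincide in substance.
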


\begin{proof}
The proof generally follows the lines of the proof of Theorem~\ref{theorem:mink-monotone}. We pass to embedding everything into $\mathbb R^L$ (this time the Gaussian Minkowski content is not affected at all) and make a homotopy $f_\alpha$ between the identity map and the final map $f$, which is $\lambda$-Lipschitz for pairs of points such that $|f(x) - f(y)|\le d$, with some $\lambda<1$.

In the definition of the Gaussian Minkowski content we now restrict the integration domain to $\dist(x, f_\alpha(X)) \le d/2$ in $\mathbb R^L$, this does not affect the result as was explained after the definition. Now we rewrite the Gaussian expression for $f_\alpha(X)$ as an average of the volumes of the $t$-neighborhoods of $f_\alpha(X)$ for $t\le d/2$. The result of Csik\'os~\cite{csi1998} then once again ensures that the volume of any $t$-neighborhood of $f_\alpha(X)$ is decreasing in $\alpha$, taking into account that the map is Lipschitz for distances $\le d$ and the radius of the neighborhood in question is at most $d/2$. Therefore the Gaussian expression does not increase in $\alpha$ as well.
\end{proof}

To conclude, we recall how the Gaussian Minkowski content is related to the waist theorems:

\begin{corollary}
Theorem \ref{theorem:CAT-waist} and Corollary \ref{corollary:cat1waist} remain valid if we replace $\umink$ by $\lgauss$.
\end{corollary}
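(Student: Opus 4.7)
The plan is to reuse the reduction already carried out in Subsection~\ref{sec:cat-proofs} and plug in the newly established ingredients about $\lgauss$. First, Lemma~\ref{lemma:cat-anti-lipschitz} supplies an anti-$1$-Lipschitz diffeomorphism $h : B'_q(R) \subset \mathbb M^n_\kappa \to B_p(R) \subset M$. Applying Theorem~\ref{theorem:model-ball-waist} to the composition $f\circ h : B'_q(R) \to Y$ yields a point $y\in Y$ whose fiber $X := (f\circ h)^{-1}(y)$ satisfies $\lmink_k(X,\mathbb M^n_\kappa) \ge \vol_k B^k(R)$, where $B^k(R) \subset \mathbb M^k_\kappa$ is the $k$-dimensional metric ball of radius $R$ in the model space.

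Now I would combine two facts. The universal inequality $\lmink_k \le \lgauss_k$ recorded right after the definition of the Gaussian Minkowski content upgrades the bound to $\lgauss_k(X,\mathbb M^n_\kappa) \ge \vol_k B^k(R)$. Since $h$ is an anti-$1$-Lipschitz bijection, the inverse $h^{-1}$ restricted to $h(X)$ is $1$-Lipschitz as a map from the compact set $h(X)\subset M$ into $\mathbb M^n_\kappa$, so Theorem~\ref{theorem:gauss-monotone} gives
\[
\lgauss_k(X,\mathbb M^n_\kappa) \;=\; \lgauss_k(h^{-1}(h(X)),\mathbb M^n_\kappa) \;\le\; \lgauss_k(h(X),M).
\]
Because $h(X) = f^{-1}(y)$, this chain delivers $\lgauss_k(f^{-1}(y),M) \ge \vol_k B^k(R)$, which is the desired strengthening of Theorem~\ref{theorem:CAT-waist}. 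The only potential snag is compactness of $h(X)$ in the hypothesis of Theorem~\ref{theorem:gauss-monotone}, but this is automatic from continuity of $f$ together with the closedness and boundedness of the metric ball $B_p(R)$ in the complete manifold $M$.

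For Corollary~\ref{corollary:cat1waist} I would invoke the strengthened theorem with $\kappa=1$ for each $R<\pi$, obtaining $y_R \in Y$ with $\lgauss_k f^{-1}(y_R) \ge \vol_k B^k(R)$ in $\mathbb S^k$, and then let $R\to\pi$ so that $\vol_k B^k(R) \to \vol_k \mathbb S^k$. The step I expect to be the main obstacle is extracting a single $y\in Y$ that inherits the limiting bound, since $\lgauss_k$ is not manifestly lower semicontinuous under taking limits of fibers. However, the same concern is present in the passage from Theorem~\ref{theorem:CAT-waist} to Corollary~\ref{corollary:cat1waist} in the $\umink$ version already proved in the paper, and should be handled in the same way (for instance, by noting that the $y_R$ lie in the compact image $f(M)$, which is compact because a complete $\CAT(1)$ manifold has diameter at most $\pi$, and by using the stability of the whole pipeline in $R$ to pass the Gaussian bound to an accumulation point).
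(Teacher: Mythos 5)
Your argument for the $\lgauss$ version of Theorem~\ref{theorem:CAT-waist} is exactly the intended one (the paper gives no separate proof, but the assembly it has in mind is the one you carry out): take the anti-$1$-Lipschitz map $h$ of Lemma~\ref{lemma:cat-anti-lipschitz}, get a fiber $X$ of $f\circ h$ with $\lmink_k(X,\mathbb M^n_\kappa)\ge\vol_k B^k(R)$ from Theorem~\ref{theorem:model-ball-waist}, upgrade via the universal inequality $\lmink_k\le\lgauss_k$, and then apply Theorem~\ref{theorem:gauss-monotone} to the $1$-Lipschitz map $h^{-1}|_{h(X)}$ in place of Theorem~\ref{theorem:mink-monotone}; the directions of all inequalities are right, and compactness of $h(X)=f^{-1}(y)$ is indeed automatic by Hopf--Rinow.

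One parenthetical in your treatment of Corollary~\ref{corollary:cat1waist} is wrong, though the conclusion you need survives. In this paper $\CAT(1)$ means complete, simply connected, with sectional curvature \emph{at most} $1$; an upper curvature bound does not bound the diameter (Euclidean space is $\CAT(1)$ in this sense), so it is false that a complete $\CAT(1)$ manifold has diameter at most $\pi$, and $f(M)$ need not be compact. The correct source of compactness is that all the balls $B_p(R)$, $R<\pi$, share the fixed center $p$, so every $y_R$ lies in $f(\overline{B_p(\pi)})$, and the closed ball $\overline{B_p(\pi)}$ is compact by Hopf--Rinow; this gives the accumulation point you want. The remaining issue you flag --- that $\lgauss_k$ of the fibers is not manifestly semicontinuous when passing $y_R\to y$ --- is genuine but is exactly the same issue already present in the paper's own passage from Theorem~\ref{theorem:CAT-waist} to Corollary~\ref{corollary:cat1waist} for $\umink$ (which the paper dispatches with ``follows immediately''), so deferring to that limiting argument is consistent with the paper's level of detail rather than a new gap introduced by you.
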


% In order to prove the version with the upper Minkowski content we need to develop some general technique, which we postpone to Section~\ref{sec:monotonicity}.

\bibliography{../Bib/karasev,../Bib/akopyan}

\bibliographystyle{abbrv}
\end{document}